\documentclass[11pt]{amsart}
 \usepackage[backref=page]{hyperref}
\hypersetup{
  colorlinks=true, %set true if you want colored links
   linkcolor=red,  %choose some color if you want links to stand out
}

\usepackage{filecontents}

\usepackage{tikz}		% which is what I use to draw pictures.

\theoremstyle{plain}
 
\newtheorem{thm}{Theorem} 
\newtheorem{theorem}[thm]{Theorem}
\newtheorem{proposition}[thm]{Proposition} 
\newtheorem{remark}[thm]{Remark} 
\newtheorem{lemma}[thm]{Lemma}
\newtheorem{corollary}[thm]{Corollary} 
\newtheorem{observation}[thm]{Observation} 
\newtheorem{question}[thm]{Open Problem} 
\newtheorem{example}[thm]{Example} 
 
 \numberwithin{thm}{section}

\def\e{\mathbb{E}\, }

\def\p{\mathbb{P} }
\def\P{\mathbb{P} }
\def\BN{\mathbb{N} }

\def\BZ{\mathbb{Z} }
\def\BF{\mathbb{F} }

\def\dtv{d_{\rm TV}}

\def\cQ{\mathcal{Q}}

\def\({\left(}
\def\){\right)}

 % for lumped together counts
\newcommand{\ignore}[1]{ }

\def \bZ  {{\mathbf{Z}}}   %  I JUST found, by accident,  that extra space is harmless, here
\def\bX{{\mathbf{X}}}
\def\bZn{{\mathbf{Z}^{(n)}}}
\def\bXn{{ \mathbf{X}^{(n)} }}  
\def\Xn{X^{(n)} } 

\def\bx {{\bf x}}
\def\bz {{\bf z}}
\def\bv {{\bf v}}
\def\bzero {{\bf 0}}

\def\.{\hskip.06cm}

\title[Countdown for corank, from $\infty$ to zero]{A countdown process, with application to the rank of matrices over $\BF_q(n)$ }  

\author{Richard Arratia and Michael Earnest}
\date{May 11, 2016  7:54  pm}                                       % Activate to display a given date or no date

\begin{document}
\begin{abstract}
 
 Motivated by the work of Fulman and Goldstein,  comparing the distribution of the corank of random matrices in
 $\BF_q[n]$ with the limit distribution as $n \to \infty$,  we define a countdown process,  driven by independent 
 geometric random variables related to random integer partitions.   Analysis of this process leads to sharper bounds on the total variation distance.
 
\end{abstract}

\maketitle

\tableofcontents

\section{Introduction}\label{sect intro}

Fulman and Goldstein \cite{FG}  used Stein's method to get lower and upper bounds on the total variation distance, between the rank distribution for random $n$ by $n+m$ matrices over the finite field $\BF_q$, and its %natural
 limit for $n \to \infty$.  For $m \ge 0$, with notation that suppresses the dependence on $m$ from the random rank, \cite{FG} proved that the  distance
 % is at least   $1/(8q^{m+n+1})$   and at most $3/q^{m+n+1}$.  
satisfies the upper and lower bounds
\begin{equation}\label{quote FG}
\frac{1}{8q^{m+n+1}} \le \dtv(\cQ_{q,n}, \cQ_q)   \le \frac{3}{q^{m+n+1}},
\end{equation}
so that the upper bound is  24 times the lower bound.  We provide  a sharper upper bound with a very simple proof in 
Theorem \ref{thm easy 1}.  With more computation,   Theorem \ref{thm dtv corank} provides matching upper and lower bounds, for $m \ge 0$, which differ by a factor of 2. 
Theorem \ref{thm dtv corank} also gives
 an  explicit asymptotic formula, for all $m \in \BZ$. %constant as a function of $q$ alone. 
 It is necessary to deal with $m<0$ as a separate case from $m \ge 0$;  although matrix transpose provides a handle on the rank distribution, there is a subtle effect on total variation distance,  so that for $m<0$ the lower bound is of order $1/q^{n+1}$ rather than  $1/q^{m+n+1}$.  Note that \cite{FG} also used Stein's method to handle five \emph{other} classes of matrices:
symmetric, symmetric with zero diagonal, skew symmetric, skew centrosymmetric, and Hermitian matrices,  but our method
only handles the simplest case.

\noindent  {\bf Acknowledgement.} We thank Jason Fulman, Larry Goldstein, and Dennis Stanton for helpful conversations.

\section{A Markov chain from linear algebra}\label{sect motivation}

Write $\BF_q$ for the finite field with $q$ elements.   The well-known formula for the number of nonsingular $n$ by $n$ matrices over $\BF_q$, % --- with the number of $m$ by $n$ matrices of rank $r$ given in \cite[Theorem 25.2 and its corollary]{vanLint}  ---
 that
$$
| {\rm GL}(n,q)| =  (q^n-1) (q^n-q) \cdots (q^n-q^{n-2}) (q^n - q^{n-1})
$$
has a well-known, and somewhat prettier \emph{probabilistic} interpretation,  by comparing with the number of all $n$ by $n$ matrices over $\BF_q$,
\begin{equation}\label{p nonsingular}
\p(\text{\small{nonsingular}}) = \frac{| {\rm GL}(n,q)|}{|\BF_q[n]|} =\frac{| {\rm GL}(n,q)|}{q^{n^2}} = %g_n(\frac{1}q)           trying to see which looks better 
%g_n(1/q)
g_n(q^{-1})
\end{equation}
where 
\begin{equation}\label{g n}
     g_n(x) := (1-x^n)(1-x^{n-1})\dots(1-x^2)(1-x).
\end{equation}
This function $g_n$ may be viewed as a perturbation of a  simpler object, the Euler function
\begin{equation}\label{g}
g(x) := \prod_{i \ge 1} (1-x^i), \ \ \text{ for }  x \text{ with }  |x|<1,
\end{equation}
sometimes called the reciprocal of the partition function, and famous for its role in the Euler pentagonal number theorem
\cite{andrews}.

Implicit in \eqref{p nonsingular}  is a story for $n$ by $n+m$ matrices, allowing $m \in \BZ$ to be negative  %, zero, or positive, 
but requiring both $n \ge 0$ and $n+m \ge 0$, thinking of $m$ as \emph{time}.    In this story, one thinks about an entire process, evolving in time,  and a natural question arises:  what is the time to \emph{hit zero}, that is,  how many length $n$ columns are needed to span a space of dimension $n$?  %achieve full rank?
% and taking $n$ with $n \ge |m|$.  
  The process story is given in detail in the following paragraph.

For fixed $n$,  consider independent random vectors $\bv_1,\bv_2,\ldots$,  distributed uniformly over the $q^n$
values in $\BF_q^n$.  With $A_k$ taken to be the space spanned by the first $k$ of these vectors,  so that $A_0$
is the singleton set containing only the all zero vector,  consider the corank of $A_k$,  for $k=0,1,2,\ldots$.  (We say \emph{corank},  thinking of the $n$ by $k$ matrix with columns $\bv_1,\ldots,\bv_k$;  the term \emph{codimension}  might be more correct, but no confusion arises from using the simpler word.)
As $k$ increases,  this  corank decreases from  $n$  down to zero. Given that the corank of $A_k$ is $i$,  and otherwise \emph{independent} of $\bv_1,\bv_2,\ldots,\bv_k$, the chance that
$\bv_{k+1} \in A_k$ so that corank($A_{k+1})=$ corank($A_k)=i$  rather than  corank($A_{k+1})=i-1$, is exactly  $q^{n-i}/q^n = q^{-i}$, \emph{regardless of the value} of $n$.   Trivially, this conditional independence leads to a  %(time-inhomogenous) NOPE,  homogeneous
Markov chain, which is  a pure death process,  with independent, geometrically distributed holding times.   We celebrate these observations as a formal statement, for future reference.

\begin{proposition}\label{prop linear algebra}
For any $n \ge 1$,  in the preceeding story over $\BF_q$, write $Y_k := n $ $- $ the dimension of $A_k$.  %,  for $k=0,1,2,\ldots$.  
Then, with $x := 1/q$,  $Y_0,Y_1,Y_2,\ldots$ is a  Markov chain on $\BZ_+$,  with transition probabilities
\begin{equation}\label{transition x}
   p(i,j) = \left\{ \begin{array}{cl}
                           x^i   &  \text{if } j=i \\
                           1-x^i   &  \text{if } j=i-1 \\
\end{array}   \right. ,
\end{equation}
 starting at $n$.
\end{proposition}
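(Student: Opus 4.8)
The plan is to formalize the conditional-independence argument sketched in the paragraph just above the statement. Set $Y_k := n - \dim A_k$. Since $A_0 = \{\bzero\}$ has dimension $0$, we have $Y_0 = n$, which is the claimed starting point. Next observe that $A_{k+1} = \mathrm{span}(A_k \cup \{\bv_{k+1}\})$, so exactly one of two things happens: either $\bv_{k+1} \in A_k$, in which case $A_{k+1} = A_k$ and $Y_{k+1} = Y_k$; or $\bv_{k+1} \notin A_k$, in which case $\dim A_{k+1} = \dim A_k + 1$ and $Y_{k+1} = Y_k - 1$. Thus from state $i$ the only transitions possible are to $i$ or to $i-1$, and it remains only to compute the probabilities.

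The key computation is the following. Because $\bv_{k+1}$ is independent of $(\bv_1,\dots,\bv_k)$ and uniform on $\BF_q^n$, for any fixed set $S \subseteq \BF_q^n$ one has $\p(\bv_{k+1} \in S) = |S|/q^n$; applying this conditionally on $(\bv_1,\dots,\bv_k)$ with $S = A_k$ gives
\begin{equation*}
\p(\bv_{k+1} \in A_k \mid \bv_1,\dots,\bv_k) = \frac{|A_k|}{q^n} = \frac{q^{\dim A_k}}{q^n} = q^{-Y_k}.
\end{equation*}
The crucial point, and really the only thing to check, is that this quantity depends on $(\bv_1,\dots,\bv_k)$ only through $Y_k$: every subspace of $\BF_q^n$ of a given dimension has the same cardinality, and the ambient $n$ has cancelled entirely. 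With $x := 1/q$ this says $Y_{k+1} = Y_k$ with conditional probability $x^{Y_k}$ and $Y_{k+1} = Y_k - 1$ with conditional probability $1 - x^{Y_k}$, matching \eqref{transition x}. The degenerate case $Y_k = 0$, where $A_k = \BF_q^n$ and the chain is absorbed, is covered automatically, since then $x^{Y_k} = 1$.

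Finally, to upgrade this to the Markov property one uses that $Y_0,\dots,Y_k$ are all measurable functions of $\bv_1,\dots,\bv_k$, so that $\sigma(Y_0,\dots,Y_k) \subseteq \sigma(\bv_1,\dots,\bv_k)$. Since the conditional law of $Y_{k+1}$ given the finer $\sigma$-field $\sigma(\bv_1,\dots,\bv_k)$ is a function of $Y_k$ alone, averaging over this finer $\sigma$-field via the tower property shows that the conditional law of $Y_{k+1}$ given $\sigma(Y_0,\dots,Y_k)$ is the same function of $Y_k$, i.e. $\p(Y_{k+1}=j \mid Y_0,\dots,Y_k) = p(Y_k,j)$ with $p$ as in \eqref{transition x}. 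I expect no genuine obstacle here: the entire content is the dimension count in the displayed equation, and the surrounding steps are routine measure-theoretic bookkeeping.
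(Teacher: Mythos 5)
Your proposal is correct and is essentially the paper's own argument: the paper's proof is a one-line pointer to the preceding paragraph, which contains exactly the dimension count $|A_k|/q^n = q^{-i}$ and the observation that it depends on the history only through the current corank. You have merely supplied the routine measure-theoretic scaffolding (tower property, nesting of $\sigma$-fields) that the paper leaves implicit.
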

\begin{proof}
The proof is given by the previous paragraph.
\end{proof}

For the sake of comparing the distribution for $n$  with its limit distribution as $n \to \infty$,  it is convenient and natural to shift the time,  replacing  $k$  by $t = k-n$,   so that the growing spaces $A_0,A_1,A_2,\ldots$ have coranks decreasing,  from $n$ down to zero,  with the \emph{time-shift}  taken so that, in the case corresponding to a nonsingular matrix,   corank zero  is hit at \emph{time} $t=0$.
    
\subsection{Counting down from infinity}
%With apologies to Sesame Street, \cite{count} --- \\  
%\ldots, ten, nine, eight, seven, six, five, four, three, two, one, zero, zero, zero,\ldots,  \emph{I love} to \emph{count}, hahahahahahahaha.    

%Slightly more pedantically,  the
The (deterministic)   countdown process, with all zero delays,  is
\begin{equation}\label{zero delay}
\bx \equiv  (x_t)_{t \in \BZ} := \phi(\bzero)  \ \text{ with }  x_{-t}=t,  x_t=0 \text{ for } t=0,1,2,\ldots .
\end{equation}
The space of allowable delays is
\begin{equation}\label{Omega}
   \Omega := \{   \bz = (z_1,z_2,\ldots):   z_1+z_2+\ldots < \infty \} 
   \subset  (\BZ_+)^\BN,
\end{equation}
with least element  $\bzero := (0,0,\ldots) \in \Omega$.
For general $\bz \in \Omega$,  the value $\bx = \phi(\bz)$ of the deterministic countdown process is that perturbation of the path given by
\eqref{zero delay}  such that
\begin{equation}\label{informal}
   z_i  \text{ is the delay at height } i,  \ i=1,2,\ldots, \ \text{ and } 0 = \lim_{t \to \infty}   x_{-t} - t .
\end{equation}    
We use  the indicator notation $1(Q)=1$ if  statement $Q$ is true, $1(Q)=0$ if  statement $Q$ is false.  We also write $\BN = \{1,2,\ldots\} $ and $\BZ_+=\{0,1,2,\ldots\}$.
A formal version of the informal specification \eqref{informal}, naming the domain and codomain,  %range,
 is that
$$ 
\phi:    \Omega \to (\BZ_+)^\BZ
$$
satisfies
\begin{equation}\label{formal}
   \bx = \phi(\bz) \text{ satisifies }  \forall i \ge 1,  1+z_i =\sum_{t \in \BZ} 1(x_t=i), \
   \forall t \,  x_t - x_{t-1} \in \{0,1\}, 
\end{equation}    
$$
    \ \text{ and } 0 = \lim_{t \to \infty}   (x_{-t} - t) \  = \lim_{t \to \infty} x_t.  
$$  
Clearly, the map $\phi$ is a bijection between $\Omega$, and the image,  $\phi(\Omega)$.

\begin{observation}\label{time to hit 0}
Suppose that $\bx=(x_t)_{t\in \BZ}=\phi(\bz)$, where $ \bz \in \Omega$. %$\bz=(z_1,z_2,\dots,)$. 
Then the hitting time to zero, for the trajectory $\bx$, is 
$$
 h_0(\bx) := \min\{ t :  x_t=0 \} = z_1+z_2+ \dots \ .
 $$

\end{observation}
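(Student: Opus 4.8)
The plan is to track, for each height $i\ge 0$, the first time the trajectory $\bx=\phi(\bz)$ reaches height $i$, and then to obtain $h_0(\bx)$ by telescoping the lengths of the successive plateaus of $\bx$, using the boundary condition at $-\infty$ from \eqref{formal} to dispose of the resulting error term.

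First I would record the structural facts about $\bx$ that follow at once from \eqref{formal}: the trajectory is monotone with consecutive values differing by at most $1$, it satisfies $x_t\to 0$ as $t\to+\infty$ and $x_{-t}-t\to 0$ as $t\to\infty$ (so $x_s\to\infty$ as $s\to-\infty$), and for each $i\ge 1$ the level set $\{t:x_t=i\}$ has exactly $1+z_i$ elements. Since $\bx$ is monotone and moves by at most $1$ per step, each level set is a block of consecutive integers, and by the discrete intermediate value property the quantity $\tau_i:=\min\{t:x_t=i\}$ is finite for every $i\ge 0$. Consequently $\tau_{i-1}-\tau_i=1+z_i$ for every $i\ge 1$ (the first visit to height $i-1$ occurs one step after the last visit to height $i$), and $h_0(\bx)=\tau_0$.

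Next I would telescope: for every $i\ge 1$,
\[
  \tau_0 \;=\; \tau_i+\sum_{j=1}^{i}(\tau_{j-1}-\tau_j)\;=\;\tau_i+i+\sum_{j=1}^{i}z_j .
\]
Finally I would let $i\to\infty$. Since $x_s\to\infty$ as $s\to-\infty$, we have $\tau_i\to-\infty$; and because $x_{\tau_i}=i$, the boundary condition $\lim_{t\to\infty}(x_{-t}-t)=0$ (equivalently $x_s+s\to 0$ as $s\to-\infty$) yields $\tau_i+i=x_{\tau_i}+\tau_i\to 0$. The left-hand side of the displayed identity is the constant $\tau_0$, so passing to the limit gives $\tau_0=\sum_{j\ge 1}z_j$, which is finite precisely because $\bz\in\Omega$. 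Since $h_0(\bx)=\tau_0$, this is exactly the assertion $h_0(\bx)=z_1+z_2+\cdots$.

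The argument is essentially bookkeeping, and I do not expect any serious difficulty. The one point that genuinely needs care is the passage to the limit — verifying that the residual $\tau_i+i$ vanishes — and this is where the $-\infty$ boundary condition in \eqref{formal} (which is also what makes $\phi$ well defined) and the summability $z_1+z_2+\cdots<\infty$ built into the definition \eqref{Omega} of $\Omega$ are used in an essential way.
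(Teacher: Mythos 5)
The paper gives no proof of this statement; it is labeled an Observation and treated as self-evident, so there is nothing to compare your argument against directly. Your telescoping argument is correct and cleanly fills in the details. You correctly extract from \eqref{formal} that each level set $\{t : x_t = i\}$ is a block of $1+z_i$ consecutive integers, so the first-entry times $\tau_i$ satisfy $\tau_{i-1} - \tau_i = 1 + z_i$, and you correctly use the $-\infty$ boundary condition (that $x_s + s \to 0$ as $s \to -\infty$) to kill the residual term $\tau_i + i$ after telescoping. One small remark: since $\bz \in \Omega$ forces $z_i = 0$ for all but finitely many $i$, the residual $\tau_i + i$ is in fact eventually \emph{equal} to zero rather than merely tending to it, so the final step is an eventually-stationary sequence rather than a genuine limiting passage — but your formulation is of course also valid. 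One could also get the result instantly by specializing Observation \ref{deathandtailsums} to $k=1$, but that observation is stated afterward and likewise without proof, so your self-contained derivation is the more honest route.
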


Figure 1 shows an example of the process $\phi(\bz)$, when $\bz=(1,3,0,2,0,0\dots)$. The process stays on the line $x=-t$ for all $t\le -4$. The largest $i$ for which $z_i$ is nonzero is $i=4$ with $z_4=2$, so there are two delays at height four, and therefore three points $(t,x_t)$ for which $x_t=4$. Similarly, $z_2=3$ causes the process to be delayed 3 times at height 2, and $z_1=1$ causes $\bx$ to spend one extra unit of time at height 1, before dropping down permanently to the $t$-axis. 

Consider the two circled points in Figure 1 at $(3,2)$ and $(4,1)$. Here, a ``death" has occured at time 3, and $x_t$ has decreased as $t$ increased. Both of these points are on the line $x=-t+5$, whereas the process started on the line $x=-t$. The %reason the 
process has moved from the line $x=-t$ for all sufficiently large $x$ to the line $x=-t+5$ for $x=2,1$ %is 
because there were $5$ delays at heights 2 and above, corresponding to the fact that $z_2+z_3+z_4+\dots=5$. In general, we have the following observation, which will be important later.

\begin{observation}\label{deathandtailsums}
Suppose that $\bx=(x_t)_{t\in \BZ}=\phi(\bz)$, where $\bz=(z_1,z_2,\dots,)$. Then
$$
x_t=k \text{ and } x_{t+1}=k-1\ \  \text{ if and only if }\ \  t+k=z_k+z_{k+1}+z_{k+2}+\dots.
$$

\end{observation}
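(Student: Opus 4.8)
The plan is to reduce the two-sided condition ``$x_t=k$ and $x_{t+1}=k-1$'' to the identification of a single time --- the last instant at which the trajectory sits at height $k$ --- and then to evaluate that time by telescoping, using Observation~\ref{time to hit 0} as the anchor.

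First I would record the structural consequences of the defining relation \eqref{formal}. Since $\bx$ is weakly decreasing and drops by at most one at each step, while $\sum_{t\in\BZ}1(x_t=i)=1+z_i$ is a positive finite integer for every $i\ge 1$ and $x_t\to 0$, each level set $L_k:=\{t:x_t=k\}$ (for $k\ge 1$) is a nonempty finite block of consecutive integers, and these blocks are stacked in the natural order, with $L_{k+1}$ lying immediately to the left of $L_k$: once the trajectory leaves height $k$ it moves to $k-1$ and never returns. Writing $\tau_k:=\max L_k$, this yields two facts. First, ``$x_t=k$ and $x_{t+1}=k-1$'' says that $t\in L_k$ while $t+1\notin L_k$, which for a block of consecutive integers means precisely $t=\tau_k$. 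Second, $L_k=\{\tau_{k+1}+1,\dots,\tau_k\}$, so that $\tau_k-\tau_{k+1}=|L_k|=1+z_k$ for all $k\ge 1$.

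Next I would anchor this recursion at $k=1$. The (half-infinite) block $L_0:=\{t:x_t=0\}$ lies immediately to the right of $L_1$, so its left endpoint is $\tau_1+1$; but that left endpoint is exactly $h_0(\bx)$, whence $\tau_1=h_0(\bx)-1=\big(\sum_{j\ge 1}z_j\big)-1$ by Observation~\ref{time to hit 0}. Iterating $\tau_k=\tau_{k+1}+1+z_k$ upward from this base case then gives
\[
\tau_k=\tau_1-\sum_{j=1}^{k-1}(1+z_j)=\Big(\sum_{j\ge k}z_j\Big)-k .
\]
Combining with the first fact above, ``$x_t=k$ and $x_{t+1}=k-1$'' $\iff t=\tau_k \iff t+k=z_k+z_{k+1}+z_{k+2}+\cdots$, which is the assertion.

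I do not expect a serious obstacle; the only point calling for care is the structural claim that the level sets are finite consecutive blocks ordered by height, which is precisely where the ``unit downward step'' and ``$x_t\to 0$'' clauses of \eqref{formal} are used, together with $1+z_k\ge 1$ to ensure $L_k\neq\emptyset$ even when $z_k=0$. As an alternative to invoking Observation~\ref{time to hit 0}, one could instead iterate $\tau_k=\tau_{k+1}+1+z_k$ downward: since $\bz\in\Omega$ there is a largest index $K$ with $z_K>0$ (take $K=0$ if $\bz=\bzero$), and for $k>K$ the trajectory agrees with the zero-delay path $\phi(\bzero)$, giving the base case $\tau_k=-k$; but the route through Observation~\ref{time to hit 0} is shorter.
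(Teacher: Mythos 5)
The paper does not supply a formal proof of Observation~\ref{deathandtailsums}; it is presented as self-evident from the Figure~1 discussion, whose implicit argument is ``count delays downward from $+\infty$ in height'': starting on the line $x+t=0$ at large heights, each unit of delay at a height $\ge k$ shifts the departure point from level $k$ one step to the right, so the last instant at height $k$ is $-k+\sum_{j\ge k}z_j$. Your proof is a correct formalization, and the heart of it --- the level sets $L_k$ are nonempty finite blocks of consecutive integers of size $1+z_k$, stacked so that $L_{k+1}$ sits immediately to the left of $L_k$, hence $\tau_k-\tau_{k+1}=1+z_k$ --- is exactly the structure the paper leaves unstated. The only notable difference is the choice of anchor: you telescope \emph{upward} in height starting from $\tau_1=h_0(\bx)-1$ via Observation~\ref{time to hit 0}, whereas the paper's picture suggests telescoping \emph{downward} from the eventual agreement with $\phi(\bzero)$ (the base case $\tau_k=-k$ for $k$ beyond the support of $\bz$, made available because $\bz\in\Omega$ has finite sum). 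You correctly flag this alternative in your final paragraph. Both anchors close the argument; yours has the mild advantage of reusing a previously stated Observation, while the downward version is self-contained and mirrors the picture the paper draws. No gaps.
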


\def \delay[#1,#2,#3] 
	{
		\foreach \x in {0,...,#3} { \fill (#1+\x,#2) circle (0.1); }
	 }
\begin{figure}
	\begin{tikzpicture}[scale=0.6]
		\draw[opacity=0.2] (-6,0) grid (8,6);
		\draw[color=black,->] (0,0) -- (0,7);
		\draw[color=black,<->] (-7,0) -- (9,0);
		\draw (9,0) node[anchor = north west] {$t$};
		\draw (0,6) node[anchor = south west] {$x$};
		\delay[-6,6,0]
		\delay[-5,5,0]
		\delay[-4,4,2]
		\delay[-1,3,0]
		\delay[0,2,3]
		\delay[4,1,1]
		\delay[6,0,2]
		\draw[dashed][opacity=0.5](-2,7) -- (6,-1);
		\draw[dashed][opacity=0.5] (-7,7) -- (1,-1);
		\draw  (-2,2) node[anchor = north east] {$x=-t$};
		\draw  (2,3) node[anchor = south west] {$x=-t+5$};
		\draw  (3,2) circle (0.2);
		\draw  (4,1) circle (0.2);
	\end{tikzpicture}
	\caption{Countdown process $\phi(\bz)$ when $\bz=(1,3,0,2,0,0,\dots)$.}
\end{figure}
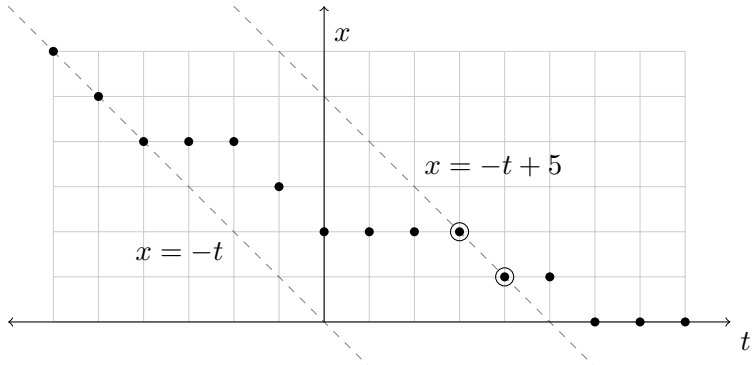

\def \delay[#1,#2,#3] 
	{
		\foreach \x in {0,...,#3} { \fill (#1+\x,#2) circle (0.1); }
	 }
\begin{figure}
	\begin{tikzpicture}[scale=0.6]
		\draw[opacity=0.2] (-6,0) grid (8,6);
		\draw[color=black,->] (0,0) -- (0,7);
		\draw[color=black,<->] (-7,0) -- (10,0);
		\draw (9,0) node[anchor = north west] {$t$};
		\draw (0,6) node[anchor = south west] {$x$};
		\delay[-5,6,0]
		\delay[-4,5,0]
		\delay[-3,4,2]
		\delay[0,3,0]
		\delay[1,2,3]
		\delay[5,1,1]
		\delay[7,0,2]
		\draw[dashed][opacity=0.3](-2,7) -- (6,-1);
		\draw[dashed][opacity=0.3] (-7,7) -- (1,-1);
	%	\draw  (-2,2) node[anchor = north east] {$x=-t$};
	%	\draw  (2,3) node[anchor = south west] {$x=-t+5$};
		\draw  (4,2) circle (0.2);
		\draw  (5,1) circle (0.2);
	\end{tikzpicture}
	\caption{Countdown process $\phi(\bz)$ when $\bz=(1,3,0,2,0,0,\dots,0,1,0,0,\dots)$.}
\end{figure}
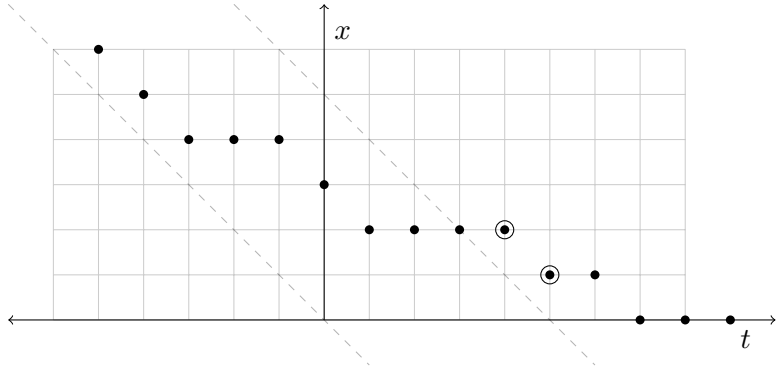

Figures 1 and 2 combined show what will be the ``typical'' difference of interest to this paper.  Looking ahead to 
\eqref{Z}, \eqref{Zn}, and \eqref{X},  in the role of $\bz$ there will be two
choices,  $\bX$ and  $\bXn$, which are very likely identical, but with small probability  differ  by a single 1 in some coordinate with large index $i_0$, and with extremely tiny probability differ in some more violent way.   So we view one of Figures 1,2  as a perturbation of the other.  The
preferred view is that Figure 2, showing the $\bz$ with the extra +1,  corresponds to the \emph{simpler} object, which is the process $\bX$, and that Figure 1, showing $\bXn$, is a perturbation of Figure 2.  This perturbation is  a left-shift by 1.  The two figures show the same window,
which excludes the $t$-neighborhoods of minus and plus infinity,  so that the causal difference of 1 at some $z_{i_0}$ for very large $i_0$
shows up only as this shift;  the jump from the line $x+t=0$  to the line $x+t=1$,  at height $x=i_0$ and times $t=i_0,i_0+1$,  does not show up in the frame of the picture.

\subsection{Geometrically distributed delays, or \emph{driving noise}.}

Fix  $x \in (0,1)$.   Let $\bZ$ %Z_1,Z_2,\ldots$ be independent, and geometrically distributed, with
be a process of \emph{independent} geometrically distributed random variables, with 
\begin{equation}\label{Z}
\bZ = (Z_1,Z_2,\ldots),  \ \p(Z_i \ge k) % \equiv \p_x  (Z_i \ge k)
 = x^{ik}, \ k=0,1,2,\ldots .
\end{equation}
(This process is natural to the study of random integer partitions;  
see Remark \ref{partition remark} for some details.)
For the process with all coordinates indexed by $i>n$ zeroed out, we write
\begin{equation}\label{Zn}
\bZn = (Z_1,Z_2,\ldots,Z_n,0,0,\ldots).
\end{equation}

Taking the sum of all coordinates, in each of the two processes specified by \eqref{Z} and \eqref{Zn}, we have
\begin{equation}\label{S}
  S := Z_1+Z_2+ \cdots,  \ \ S_n := Z_1 + \cdots+ Z_n, % \ \ S_0=0.
\end{equation}
with $S_0:=0$.
Applying the countdown function $\phi$,  defined by \eqref{informal} -- \eqref{formal}, to each of the two processes  specified by \eqref{Z} and \eqref{Zn},  we have
\begin{equation}\label{X}
  \bX := \phi(\bZ),  \ \bXn := \phi(\bZn).
\end{equation}
We will be interested in comparing $\bX$ with $\bXn$, and a first step is to compare $S$ with $S_n$,  so we also define
 \begin{equation}\label{R}
  R_n := S - S_n = Z_{n+1}+Z_{n+2} + \cdots,
  \end{equation}
for $n=0,1,2,\ldots$.  Note that $S=R_0$.

\begin{proposition}\label{prop X}.
For any $x \in (0,1)$,  for any  $n \ge 1$,   $\Xn_{-n}, \Xn_{-n+1}, \Xn_{-n+2},\ldots$   is a  Markov chain on $\BZ_+$,   with transition probabilities 
given by \eqref{transition x}, starting at $n$.  
\end{proposition}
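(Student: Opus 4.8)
The plan is to reduce the claim to the deterministic description of $\phi$ in \eqref{formal}, together with Observation~\ref{deathandtailsums}. Write $\tilde Y_\ell := X^{(n)}_{-n+\ell}$ for $\ell \ge 0$; the goal is to show $\tilde Y$ is the pure death chain of Proposition~\ref{prop linear algebra}.

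\emph{Starting value and pure-death structure.} Since $Z_i = 0$ for $i > n$ in $\bZn$, Observation~\ref{deathandtailsums} applied to $\bz = \bZn$ shows that the death from level $k$ down to $k-1$ occurs (for $1 \le k \le n$) at the time
\[
  \tau_k \ :=\ (Z_k + Z_{k+1} + \cdots + Z_n) - k ,
\]
and $\tau_n < \tau_{n-1} < \cdots < \tau_1$ because $\tau_{k-1} - \tau_k = 1 + Z_{k-1} \ge 1$, while no death occurs at any level $> n$. Combined with the non-increasing unit-step structure and the limit condition $\lim_{t\to\infty} X^{(n)}_t = 0$ of \eqref{formal}, this forces $X^{(n)}_t = -t$ for all $t \le -n$ — in particular $\tilde Y_0 = X^{(n)}_{-n} = n$, giving the required starting point — and shows that $\tilde Y$ is a pure death process on $\BZ_+$ which occupies level $i$ for exactly $1 + Z_i$ consecutive time steps (for $1 \le i \le n$) and then steps down to $i-1$.

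\emph{From holding times to transition probabilities.} Because the levels are occupied in the strictly decreasing order $n, n-1, \ldots, 1, 0$ and the $Z_i$ are independent by \eqref{Z}, the successive holding times of $\tilde Y$ are independent, the one at level $i$ being distributed as $1 + Z_i$ with $\p(Z_i \ge m) = x^{im}$. A process of this form is exactly the Markov chain with transitions \eqref{transition x}: condition on the trajectory of $\tilde Y$ up to a time at which it sits at level $i$, having occupied $i$ for $\ell+1$ steps; by Observation~\ref{deathandtailsums} this information amounts to fixing $Z_{i+1}, \ldots, Z_n$ and the single event $\{Z_i \ge \ell\}$, leaving $Z_1, \ldots, Z_{i-1}$ with their original law, so by memorylessness of the geometric distribution the conditional probability of remaining at $i$ at the next step is $\p(Z_i \ge \ell+1 \mid Z_i \ge \ell) = x^i$, independently of the rest of the past — which is precisely the Markov property together with \eqref{transition x}. (Alternatively, one checks by a short computation that $\p(\tilde Y$ follows a given admissible path$)$ is a product of terms $\p(Z_j = m_j - 1) = (x^j)^{m_j-1}(1-x^j)$ over completed levels and one term $\p(Z_i \ge m_i - 1) = (x^i)^{m_i-1}$ for the current level, and that this coincides with the corresponding product of transition probabilities in \eqref{transition x}; or one simply notes that Proposition~\ref{prop linear algebra} already exhibits a process with this same pure-death independent-holding-time description, so the laws must agree.)

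The one delicate point — the main obstacle — is the conditioning claim in the middle step: one must verify that observing $(X^{(n)}_s)_{-n \le s \le t}$ reveals, through the deterministic bijection $\phi$, \emph{precisely} the values $Z_{i+1}, \ldots, Z_n$ and the lone inequality $Z_i \ge \ell$, and nothing about $Z_1, \ldots, Z_{i-1}$. This is exactly the content that Observation~\ref{deathandtailsums} is designed to supply, since it locates each death in terms of a tail sum of the $Z_j$; once that is in hand, memorylessness of the geometric law closes the argument.
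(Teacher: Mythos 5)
Your proof is correct and rests on exactly the same idea as the paper's, namely the memorylessness of the geometric random variables $Z_i$; the paper simply declares the result ``Obvious'' on that basis, whereas you have spelled out in full the reduction (via Observation~\ref{deathandtailsums} and the bijectivity of $\phi$) showing that the observed past reveals precisely $Z_{i+1},\ldots,Z_n$ and the event $\{Z_i \ge \ell\}$, so that memorylessness yields the transition probabilities \eqref{transition x}.
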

\begin{proof}
Obvious;  it corresponds to the ``memoryless" property of geometric distributions.
\end{proof}

\begin{proposition}\label{prop X q n}
For $x=1/q$ where $q$ is a prime power, for $n \ge 1$,  the segment  $\Xn_{-n}, \Xn_{-n+1}, \Xn_{-n+2},\ldots$   of $\bXn$ is a realization of $Y_0,Y_1,Y_2,\ldots$, the Markov chain for linear algebra over $\BF_q$ as in Proposition \ref{prop linear algebra}.   In particular,  for integers $m,n$ with $n \ge 1$ and $n+m \ge 0$,  
\begin{eqnarray}
\Xn_m &=^d &  Y_{n+m} \nonumber \\
& =^d & n - \text{the rank of a random $n$ by $n+m$ matrix $M$ over $\BF_q$}. 
\label{q connection n}
\end{eqnarray}
\end{proposition}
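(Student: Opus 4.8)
The plan is to obtain the proposition by combining the two Markov-chain descriptions already in hand. First I would apply Proposition~\ref{prop X} with $x=1/q$ --- which lies in $(0,1)$ since $q\ge 2$ --- to learn that $\Xn_{-n},\Xn_{-n+1},\Xn_{-n+2},\ldots$ is a Markov chain on $\BZ_+$ with transition probabilities \eqref{transition x} and initial state $n$. Then I would apply Proposition~\ref{prop linear algebra}, which asserts that $Y_0,Y_1,Y_2,\ldots$ is a Markov chain on $\BZ_+$ with exactly the same transition probabilities \eqref{transition x} (for the same value $x=1/q$) and the same initial state $n$. Since the law of a time-homogeneous Markov chain is determined by its state space, its one-step transition kernel, and the law of its initial state, the processes $(\Xn_{-n+k})_{k\ge 0}$ and $(Y_k)_{k\ge 0}$ then have the same distribution; in particular $\Xn_{-n+k}=^d Y_k$ for every $k\ge 0$. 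This is the precise sense in which the segment of $\bXn$ ``is a realization of'' the linear-algebra chain.

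To extract the displayed identity \eqref{q connection n}, I would specialize to $k=n+m$. This is a legitimate index precisely because the hypotheses $n\ge 1$ and $n+m\ge 0$ force $k\ge 0$, and it gives $\Xn_m=^d Y_{n+m}$, the first line of \eqref{q connection n}. For the second line I would unwind the definition of $Y_{n+m}$ from the story preceding Proposition~\ref{prop linear algebra}: $Y_{n+m}=n-\dim A_{n+m}$, where $A_{n+m}$ is the span of the independent uniform vectors $\bv_1,\ldots,\bv_{n+m}$ in $\BF_q^n$. Assembling these vectors as the columns of an $n$ by $n+m$ matrix $M$ yields a uniformly random matrix over $\BF_q$ (i.i.d.\ uniform columns correspond to a uniform matrix), whose column space is exactly $A_{n+m}$ and whose rank is $\dim A_{n+m}$. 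Hence $Y_{n+m}=^d n-\operatorname{rank}(M)$, which completes the chain of equalities in \eqref{q connection n}.

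I do not anticipate a genuine obstacle: the substance has already been carried out in Propositions~\ref{prop linear algebra} and~\ref{prop X}, and what is left is bookkeeping. The only points that need care are checking $1/q\in(0,1)$ so that Proposition~\ref{prop X} applies; tracking the time-shift correctly, i.e.\ matching the index $k$ of $Y_k$ with the index $-n+k$ of $\bXn$ and then solving $-n+k=m$ (which is exactly where the constraint $n+m\ge 0$ enters); and the routine observations that a matrix with i.i.d.\ uniform columns is itself uniform over all $n$ by $n+m$ matrices, and that the rank of a matrix equals the dimension of its column span.
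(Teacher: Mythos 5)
Your proof is correct and matches the paper's approach; the paper just says ``Obvious again, apart from the trickiness of the time shift by $n$'' and you have supplied exactly the bookkeeping (matching transition kernels and initial states via Propositions~\ref{prop linear algebra} and~\ref{prop X}, solving $-n+k=m$ so that $n+m\ge 0$ ensures $k\ge 0$, and identifying i.i.d.\ uniform columns with a uniform matrix whose rank is the column-span dimension) that makes the time shift transparent.
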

\begin{proof}
Obvious again,  apart from the trickiness of the time shift by $n$ connecting the two processes.
\end{proof}

\begin{proposition}\label{prop X q}
 For $x=1/q$ where $q$ is a prime power, for  $m \in \BZ$,   $X_m$,  from the process $\bX$ defined by \eqref{Z} and \eqref{X}, is distributed as the limit, upon $n \to \infty$, of $n$ minus 
 the rank of a random $n$ by $n+m$ matrix $M$ over $\BF_q$.
 \end{proposition}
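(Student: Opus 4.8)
The plan is to combine Proposition~\ref{prop X q n}, which gives the exact distributional identity $\Xn_m =^d Y_{n+m} =^d n - \mathrm{rank}(M)$ for a random $n$ by $n+m$ matrix, with a convergence argument showing $\Xn_m \to X_m$ in distribution as $n\to\infty$. Since all the variables in question are nonnegative integers, convergence in distribution is equivalent to pointwise convergence of the probability mass functions, so it suffices to show that for each fixed $j\in\BZ_+$ and each fixed $m\in\BZ$, $\P(\Xn_m = j)\to\P(X_m=j)$ as $n\to\infty$. Recall $\bXn=\phi(\bZn)$ and $\bX=\phi(\bZ)$, where $\bZn$ and $\bZ$ differ only in the coordinates indexed by $i>n$, which are zeroed out in $\bZn$.

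The key step is to control how the countdown map $\phi$ responds to this truncation at a fixed window. By Observation~\ref{deathandtailsums}, the event $\{x_t=k\}$ is determined by the tail sums $z_k+z_{k+1}+\cdots$, so in particular the behavior of $\bx=\phi(\bz)$ at a fixed height $k$ and fixed time $t$ depends only on finitely many ``levels'' only through their tails. More robustly: on the event $R_n = Z_{n+1}+Z_{n+2}+\cdots = 0$ (equivalently $S = S_n$), we have $\bZ = \bZn$ exactly, hence $\bX = \bXn$ and $X_m = \Xn_m$. From \eqref{Z}, $\P(Z_i\ge 1) = x^i$, so $\P(R_n \ge 1) \le \sum_{i>n} x^i = x^{n+1}/(1-x) \to 0$. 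Therefore for every $m$ and every measurable $B\subseteq\BZ_+$,
\begin{equation}\label{coupling bound}
   |\P(X_m\in B) - \P(\Xn_m\in B)| \le \P(\bX \neq \bXn) \le \P(R_n\ge 1) \le \frac{x^{n+1}}{1-x},
\end{equation}
which tends to $0$ as $n\to\infty$. This is a genuine coupling: $\bX$ and $\bXn$ are built from the same underlying $\bZ$, and they agree whenever the discarded tail is trivial. Hence $\Xn_m \to X_m$ in total variation, a fortiori in distribution.

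Finally, combining \eqref{coupling bound} with \eqref{q connection n}: for each $m\in\BZ$ and each $n$ large enough that $n\ge 1$ and $n+m\ge 0$, the law of $\Xn_m$ equals the law of $n$ minus the rank of a random $n$ by $n+m$ matrix over $\BF_q$; letting $n\to\infty$, these laws converge (in total variation) to the law of $X_m$. Thus $X_m$ is exactly the claimed limiting distribution. The main obstacle is really just the bookkeeping around the time-shift: one must make sure that ``$\Xn_m$'' for the finite-$n$ chain is indexed consistently with ``$X_m$'' for the $\bX$-process, which Proposition~\ref{prop X q n} has already pinned down, and that the window of interest (fixed $m$, values $j$) is eventually stable under truncation — both of which follow from the coupling in \eqref{coupling bound}. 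No further computation is needed; the statement is essentially a formalization of the remark preceding Section~2.2 that $\bXn$ is a perturbation of $\bX$ that is negligible with probability $1-O(x^{n})$.
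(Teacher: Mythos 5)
Your proposal is correct and takes essentially the same route as the paper: invoke Proposition~\ref{prop X q n} for the finite-$n$ identity, then use the natural coupling $\bX=\phi(\bZ)$, $\bXn=\phi(\bZn)$ to observe that $\p(X_m\ne\Xn_m)\le\p(\bZ\ne\bZn)=\p(R_n\ne 0)\to 0$. The only difference is that you flesh out the quantitative bound $\p(R_n\ge 1)\le x^{n+1}/(1-x)$, which the paper leaves implicit in its one-line proof.
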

\begin{proof}
Obvious from Proposition \ref{prop X q n} and the coupling, with  $\p(X_m \ne \Xn_m) \le \p(\bX \ne \bXn) = \p(\bZ \ne \bZn) = \p(Z_{n+1}+Z_{n+2}+\dots \ne 0)  $.
\end{proof}

\section{Easy bounds on total variation distance}

The total variation distance between random elements, say $X,Y$ in a space $S$ is defined, in general,  by
\begin{equation}\label{dtv 1}
\dtv(X,Y)= \sup_{B \subset S} \ | \p(X\in B) - \p(Y \in B)|,
\end{equation}
where the supremum is taken over \emph{measurable} subsets of $S$.
In case $S$ is discrete,  this is equivalent to
\begin{equation}\label{dtv 2}
\dtv(X,Y)= \sum_k   \max(0, \p(X = k) - \p(Y = k)).
\end{equation}
Another characterization of total variation distance is that  
  the total variation distance  between $X$ and $Y$ is equal to the infimum, over all couplings, of $\p(X\neq Y)$;  it is understood that the \emph{marginal} distributions of $X$ and $Y$ have been specified, and \emph{coupling}  means to choose \emph{any joint distribution} for $(X,Y)$ having the given marginals.
 
\begin{theorem}\label{thm dtv process} 
For   any $n \ge 1$ and  $x \in (0,1)$,  the  total variation distance between the
processes $\bX$  and  $\bXn$  defined by \eqref{Z} and \eqref{X}  is
 \begin{eqnarray}
 \dtv(\bX,\bXn) & = & \p(R_n>0) \nonumber  \\
 & = &  1-\prod_{i=n+1}^\infty (1-x^i)    \label{first upper bound},
\end{eqnarray}
and
\begin{equation}\label{bounds dtv process}
\frac{x^{n+1}}{1-x}-\frac{x^{2n+3}}{(1-x)^2}\le d_{TV}(\bX,\bXn)\le \frac{x^{n+1}}{1-x}.
\end{equation}¥

\end{theorem}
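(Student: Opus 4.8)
The plan is to establish, in order, the exact identity $\dtv(\bX,\bXn)=\p(R_n>0)$, its evaluation as $1-\prod_{i=n+1}^\infty(1-x^i)$, and finally the two-sided bound \eqref{bounds dtv process}.

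For the identity I would pass to the driving noise. Since $\e S=\sum_{i\ge1}\e Z_i=\sum_{i\ge1}x^i/(1-x^i)<\infty$, we have $S<\infty$ almost surely, so both $\bZ$ and $\bZn$ lie in $\Omega$ with probability one and $\bX=\phi(\bZ)$, $\bXn=\phi(\bZn)$ are well defined. As $\phi$ is a bijection onto its image, it preserves total variation distance, so $\dtv(\bX,\bXn)=\dtv(\bZ,\bZn)$. Now $\bZn$ is the image of $\bZ$ under the deterministic map that zeros out every coordinate of index $>n$; this built-in coupling has $\{\bZ\ne\bZn\}=\{Z_{n+1}+Z_{n+2}+\cdots\ne0\}=\{R_n>0\}$, whence $\dtv(\bZ,\bZn)\le\p(R_n>0)$. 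For the matching lower bound, take the test set $B$ of sequences all of whose coordinates of index $>n$ vanish: then $\p(\bZn\in B)=1$ while $\p(\bZ\in B)=\p(R_n=0)$, so $\dtv(\bZ,\bZn)\ge1-\p(R_n=0)=\p(R_n>0)$. This proves $\dtv(\bX,\bXn)=\p(R_n>0)$. The evaluation is then immediate: by the independence in \eqref{Z} and $\p(Z_i=0)=1-x^i$,
\[
\p(R_n=0)=\p\Big(\bigcap_{i>n}\{Z_i=0\}\Big)=\prod_{i=n+1}^\infty(1-x^i),
\]
so $\p(R_n>0)=1-\prod_{i=n+1}^\infty(1-x^i)$ (in particular the product converges to a positive limit).

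For \eqref{bounds dtv process}, write $\{R_n>0\}=\bigcup_{i>n}A_i$ with $A_i:=\{Z_i>0\}$, so that $\p(A_i)=x^i$ and the $A_i$ are independent. The union bound gives $\p(R_n>0)\le\sum_{i>n}x^i=x^{n+1}/(1-x)$, the upper bound. The second Bonferroni inequality gives
\[
\p(R_n>0)\ \ge\ \sum_{i>n}\p(A_i)-\sum_{n<i<j}\p(A_i\cap A_j)\ =\ \frac{x^{n+1}}{1-x}-\sum_{n<i<j}x^{i+j},
\]
using independence to evaluate $\p(A_i\cap A_j)=x^{i+j}$. Summing the double geometric series (first over $j>i$, then over $i>n$) gives $x^{2n+3}/\big((1-x)(1-x^2)\big)$; since $x\in(0,1)$ we have $1-x^2\ge1-x$, so this is at most $x^{2n+3}/(1-x)^2$, and substituting the larger quantity gives the claimed lower bound.

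The only step that calls for any care is the first identity: one must supply both halves — the coupling (upper) half and the test-set (lower) half — together with the routine observation that total variation distance is invariant under the bijection $\phi$. Everything after that is the two Bonferroni inequalities plus summation of geometric series.
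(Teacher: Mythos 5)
Your proof is correct and follows essentially the same path as the paper's: the exact identity $\dtv(\bX,\bXn)=\p(R_n>0)$ comes from the built-in coupling $\bZ\mapsto\bZn$ (upper half) together with a test set detecting a nonzero coordinate of index $>n$ (lower half), and the numerical bounds come from the first two Bonferroni inequalities; your passage through $\bZ$-space via the observation that the bijection $\phi$ preserves total variation distance is just a clean rephrasing of the paper's choice to use the test set $A=\{\bx: x_t\ne -t \text{ for some } t<-n\}$ directly in $\bX$-space. One small point where you are actually more careful than the paper: you correctly evaluate $\sum_{n<i<j}x^{i+j}=x^{2n+3}/\bigl((1-x)(1-x^2)\bigr)$ and then bound it above by $x^{2n+3}/(1-x)^2$, whereas the paper asserts this as an equality --- a harmless slip, since the inequality goes the needed direction and the stated lower bound in \eqref{bounds dtv process} still holds.
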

\begin{proof}  Let $A=\{\bx\in (\BZ_+)^\BZ: x_t\neq -t \text{ for some } t<-n\}$ be the set of paths in which  a delay happened at some height above $n$. Then $\p(\bXn\in A)=0$, but $P(\bX\in A)=\p(Z_{n+1}+Z_{n+2}+\dots>0)=\p(R_n>0),$ showing that $$d_{TV}(\bX,\bXn)\ge \p(\bX\in A)-\p(\bXn\in A)=\p(R_n>0).$$ 

Conversely, the total variation distance  between $\bX$ and $\bXn$ is equal to the infimum, over all couplings, of $P(\bX\neq \bXn)$. In the coupling given by \eqref{X}, $\bX$ and $\bXn$ are unequal if and only if $Z_{n+1}+Z_{n+2}+\dots>0$, proving the corresponding upper bound, and hence the equality in \eqref{first upper bound}.

The lower and upper bounds in \eqref{bounds dtv process} follow from the exact expression on the right side 
of \eqref{first upper bound} and from the ``Bonferroni" inequalities
\begin{equation}
\sum_{1\le i\le m} p_i-\sum_{1\le i<j\le m}  p_i p_j \ \le \	1-\prod_{i=1}^m (1-p_i) \le \sum_{1\le i\le m}  p_i \label{bonferroni}
\end{equation}
valid for all $m\in \BN$, provided $0\le p_i\le 1$ for $1\le i\le m$. To use these to prove  \eqref{bounds dtv process}, set $p_i=x^{n+i}$, let $m\to\infty$, then use the facts that $\sum_{i> n}x^i=x^{n+1}/(1-x)$ and $\sum_{j>i>n}x^ix^j = x^{2n+3}/(1-x)^2$.

The classical (first two)  Bonferroni inequalities are
$$
  \sum_{1 \le i \le m} \p(B_i)  - \sum_{1 \le i < j \le m} \p(B_i \cap B_j)   \le   \p \left( \bigcup B_i \right) \le  \sum_{1 \le i \le m} \p(B_i),
$$
and with $B_i := \{ Z_{n+i} \ge 1\}$ we have $p_i=\p(B_i)$, and for $i<j$,  $p_ip_j =   \p(B_i \cap B_j)$,  using the independence of
$Z_1,Z_2,\dots$,  we have exactly \eqref{bonferroni}.

\ignore{   %ignore
The upper bound \eqref{bonferroniupper} has an easy proof by induction on $m$. To prove \eqref{bonferronilower} by induction, note that
\begin{align*}
\prod_{i=1}^m (1-p_i)
	& =(1-p_m)\prod_{i=1}^{m-1}(1-p_i)\\
	&\le (1-p_m)\(1-\sum_{1\le i\le m-1} p_i+\sum_{1\le i<j\le m-1}p_ip_j\)\\
	&\le (1-p_m)\(1-\sum_{1\le i\le m-1} p_i\)+\sum_{1\le i<j\le m-1}p_ip_j\\
	&=1-\sum_{1\le i\le m} p_i+\sum_{1\le i<j\le m}p_ip_j.
\end{align*}¥
}   %end ignore

\end{proof}
\begin{corollary}\label{cor functional}
For   any $n \ge 1$, $t \in \BZ$,  and  $x \in (0,1)$,
\begin{equation}\label{marginal upper bound}
\dtv( X_t, \Xn_t)  \le u(n) := 1-\prod_{i=n+1}^\infty (1-x^i)  \le \frac{x^{n+1}}{1-x}
\end{equation}
and 
\begin{equation}\label{S upper bound}
\dtv(S, S_n)  \le u(n) := 1-\prod_{i=n+1}^\infty (1-x^i)  \le \frac{x^{n+1}}{1-x}.
\end{equation}

 \end{corollary}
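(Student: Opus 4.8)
The plan is to deduce both inequalities directly from Theorem \ref{thm dtv process}, using only the elementary fact that total variation distance does not increase under a common measurable map, equivalently, that any coupling of two random objects induces a coupling of their images. Nothing new needs to be computed: the quantity $u(n)$ is precisely the $\p(R_n>0)$ appearing in \eqref{first upper bound}, and the estimate $u(n)\le x^{n+1}/(1-x)$ is already the upper half of \eqref{bounds dtv process}.

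For the functional bound \eqref{marginal upper bound}, I would fix $t\in\BZ$ and consider the coordinate projection $\pi_t\colon (\BZ_+)^\BZ\to\BZ_+$, $\bx\mapsto x_t$, which is measurable and satisfies $X_t=\pi_t(\bX)$, $\Xn_t=\pi_t(\bXn)$. Taking the coupling of $\bX$ and $\bXn$ furnished by \eqref{X} — built from the single process $\bZ$ together with its truncation $\bZn$ — and pushing it forward by $\pi_t$ yields a coupling of $X_t$ and $\Xn_t$ in which $\{X_t\neq\Xn_t\}\subseteq\{\bX\neq\bXn\}$. Hence, by the coupling characterization of $\dtv$ recalled just before Theorem \ref{thm dtv process},
$$
\dtv(X_t,\Xn_t)\ \le\ \p(\bX\neq\bXn)\ =\ \dtv(\bX,\bXn)\ =\ u(n),
$$
the last two equalities being \eqref{first upper bound}; the bound $u(n)\le x^{n+1}/(1-x)$ then comes from \eqref{bounds dtv process}.

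For \eqref{S upper bound}, I would argue in the same spirit: reading $S=Z_1+Z_2+\cdots$ and $S_n=Z_1+\cdots+Z_n$ off the same process $\bZ$ (with $\bZn$ its truncation) gives a coupling of $S$ and $S_n$ under which $\{S\neq S_n\}=\{Z_{n+1}+Z_{n+2}+\cdots>0\}=\{R_n>0\}$. Therefore $\dtv(S,S_n)\le\p(R_n>0)$, and $\p(R_n>0)=1-\prod_{i=n+1}^\infty(1-x^i)=u(n)$ is exactly what was computed inside the proof of Theorem \ref{thm dtv process}; the inequality $u(n)\le x^{n+1}/(1-x)$ is again \eqref{bounds dtv process}.

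There is essentially no obstacle here; the only point to keep straight is that the two assertions concern the one–dimensional marginals $X_t,\Xn_t$ (respectively the partial sums $S,S_n$) rather than the full processes, so one must exhibit an explicit coupling of these lower–dimensional objects — which the coupling of \eqref{X} hands over for free. One could alternatively invoke the general contraction property $\dtv(f(U),f(V))\le\dtv(U,V)$ for the measurable maps $f=\pi_t$ and $f(\bz)=(\sum_i z_i,\ \sum_{i\le n} z_i)$, but spelling out the coupling keeps the argument self-contained.
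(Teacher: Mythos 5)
Your argument is correct and follows essentially the same route as the paper's: both bounds are obtained from Theorem~\ref{thm dtv process} by observing that total variation distance does not increase under a common measurable map, the relevant maps being coordinate extraction for $X_t,\Xn_t$ and a sum for $S,S_n$. The only cosmetic difference is in the second half: the paper realizes $S=h_0(\bX)$ and $S_n=h_0(\bXn)$ via the hitting-time functional $h_0$ of Observation~\ref{time to hit 0} applied to the countdown processes, whereas you read $S$ and $S_n$ off $\bZ$ and its truncation $\bZn$ directly; since $\phi$ is a bijection with $\dtv(\bX,\bXn)=\dtv(\bZ,\bZn)$, the two formulations are interchangeable.
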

\begin{proof}
These upper bounds are an immediate corollary of Theorem \ref{thm dtv process}, since 
%$X_t$ and $\Xn_t$ are deterministic functions of $\bX$ and $\bXn$.
with the deterministic function $e_t$,  ``extract   coordinate $ t$'', we have $X_t=e_t(\bX)$
and $\Xn_t=e_t(\bXn)$,  and with deterministic functional  $h_0$,  the hitting time to zero from Observation \ref{time to hit 0},
we have $S=h_0(\bX)$ and $S_n =h_0(\bXn)$.
\end{proof}

Note the the upper bound $u(n)$ in \eqref{marginal upper bound} \emph{does not vary} with $t \in \BZ$.
In the next proposition,  we give a sort of matching lower bound; this lower bound varies with $t$.  The upper bound in \eqref{marginal upper bound} is quite poor for $t>0$, as will eventually be seen from 
 Theorem \ref{thm dtv corank}. The next proposition gives our ``easy'' lower bound for cases with $t \le 0$.  
\begin{proposition}\label{prop nonpositive t}
For   $x \in (0,1)$,  for integers $n,t$ with  $n \ge 1$, $t \le 0$,    
\begin{eqnarray*}
\dtv( X_t, \Xn_t) & \ge &  \p(\Xn_t+t=0) - \p(X_t+t=0) \\
& = & \left( \prod_{-t < i \le n} (1-x^i) \right) \left(1-\prod_{i=n+1}^\infty (1-x^i)  \right) \\
& \ge & \ell(t,n) := \left( \prod_{-t < i \le n} (1-x^i) \right) \ \left( \frac{x^{n+1}}{1-x}-\frac{x^{2n+3}}{(1-x)^2} \right)  .
\end{eqnarray*}
 \end{proposition}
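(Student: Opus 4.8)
The plan is to feed a single well-chosen event into the definition \eqref{dtv 1} of total variation distance, and then to evaluate the two resulting probabilities explicitly in terms of the driving geometrics $Z_i$ of \eqref{Z}. Taking $B=\{-t\}$ in \eqref{dtv 1} gives
\[
\dtv(X_t,\Xn_t)\ \ge\ \bigl|\p(\Xn_t=-t)-\p(X_t=-t)\bigr|\ =\ \bigl|\p(\Xn_t+t=0)-\p(X_t+t=0)\bigr|,
\]
so the whole problem reduces to identifying $\p(\Xn_t+t=0)$ and $\p(X_t+t=0)$ and checking that the first is the larger.

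For the first probability I use the structure of the countdown process. By Proposition \ref{prop X}, $\Xn_{-n},\Xn_{-n+1},\dots$ is the death chain \eqref{transition x} started from height $n$ at time $-n$; counting the holding (``delay'') steps up to time $t$ shows $\Xn_t=-t$ exactly when there have been no delays, i.e.\ when the chain has run straight down from $n$ at time $-n$ to $-t$ at time $t$. Since there are $Z_i$ delays at height $i$ (and the chain passes through heights $n,n-1,\dots,-t+1$ on the way), this is precisely the event $\{Z_{-t+1}=Z_{-t+2}=\cdots=Z_n=0\}$, so by independence $\p(\Xn_t+t=0)=\prod_{-t<i\le n}(1-x^i)$ (an empty product, equal to $1$, when $t\le -n$). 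The same picture for $\bX=\phi(\bZ)$ — which follows the line $x_s=-s$ until its first delay, occurring at the largest height carrying a delay — shows $\{X_t+t=0\}=\{Z_i=0\ \text{for all}\ i>-t\}=\{R_{-t}=0\}$ with $R_{-t}$ as in \eqref{R}.

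Now I would simply condition: on $\{\Xn_t+t=0\}$ the coordinates $Z_{-t+1},\dots,Z_n$ already vanish, so $\{X_t+t=0\}\cap\{\Xn_t+t=0\}=\{R_n=0\}\cap\{\Xn_t+t=0\}$, and independence of the $Z_i$ gives
\[
\p(\Xn_t+t=0)-\p(X_t+t=0)=\p(\Xn_t+t=0)\,\p(R_n>0)=\Bigl(\prod_{-t<i\le n}(1-x^i)\Bigr)\Bigl(1-\prod_{i=n+1}^{\infty}(1-x^i)\Bigr)\ \ge\ 0.
\]
This nonnegativity lets me drop the absolute value in the first display and establishes the claimed equality, and the final inequality $\dtv(X_t,\Xn_t)\ge\ell(t,n)$ then follows at once by bounding $1-\prod_{i=n+1}^{\infty}(1-x^i)=\p(R_n>0)$ below by $\tfrac{x^{n+1}}{1-x}-\tfrac{x^{2n+3}}{(1-x)^2}$, which is exactly the lower bound of \eqref{bounds dtv process} in Theorem \ref{thm dtv process}, and then multiplying through by the nonnegative product.

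The one step that requires genuine care is the translation of the events $\{\Xn_t+t=0\}$ and $\{X_t+t=0\}$ into conditions on the $Z_i$: this rests on Observation \ref{deathandtailsums} and on keeping the time-shift straight (the chain for $\bXn$ being at height $n$ at time $-n$, and $\bX$ being its ``$n=\infty$'' limit under the coupling \eqref{X}). Once those identifications are in hand, the rest is a one-line appeal to the definition of $\dtv$, to independence, and to Theorem \ref{thm dtv process}.
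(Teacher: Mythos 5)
Your proof is correct and takes essentially the same route as the paper: identify $\{X_t+t=0\}$ and $\{\Xn_t+t=0\}$ as events on the driving variables $Z_i$, use independence to factor out $\p(R_n>0)$, and finish with the Bonferroni lower bound from Theorem \ref{thm dtv process}. The paper's proof is terser (it states the event identifications and then says ``the inequality is the same as before''), but the underlying argument is identical; you have merely spelled out the intermediate steps.
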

\begin{proof}
For $t\le 0$,  that  the event $\{X_t+t=0\}$ equals the event that $(Z_{-t+1}=Z_{-t+2}=\dots=0)$, and  $\{\Xn_t+t=0\}$ equals the event that $(Z_{-t+1}=Z_{-t+2}=\dots=Z_n=0)$.  The inequality is the same that we used in getting \eqref{bounds dtv process}
from \eqref{first upper bound}.

\end{proof}

Consider the relation between exact formulas, asymptotics, lower bounds, and upper bounds.  We use  the notation $a_n\sim b_n$ to mean that $a_n$ is \emph{asymtotically equal} to $b_n$,  formally defined by $\lim_{n\to \infty}a_n/b_n=1$.   It is obvious from 
\eqref{bounds dtv process}  that exact expression for the  distance between \emph{processes} may
be described asymptotically, with
$$
d_{TV}(\bX,\bXn)\sim \frac{x^{n+1}}{1-x}.
$$
It is more difficult to give asymptotics for the distance between the marginals, $\dtv( X_t, \Xn_t)$.
 Corollary \ref{cor functional}  and Proposition \ref{prop nonpositive t} provide upper and lower bounds, $u(n)$ and
 $\ell(t,n)$,  with
 $$
 r(t) := \lim_{n \to \infty} \frac{\ell(t,n)}{u(n)} = \prod_{-t < i} (1-x^i) 
 $$
 which, with the notation from \eqref{g} and the display above \eqref{g} is $r(t) = g(x) / g_{-t}(x)$.   At $t=0$ the product $g_{-t}$ has no factors;  it is identically 1,  and we have $r(t)=g(x)$.
 As $t \to -\infty$,  $g_{-t}$ acquires more and more of the factors of $g$,  and $r(t) \nearrow 1$, so in a sense,
 the upper and lower bounds combined come close  to giving the asymptotic total variation distance.
 
    We will completely handle the task of giving asymptotics for $\dtv( X_t, \Xn_t)$, with Theorem   \ref{thm dtv corank}.

\begin{proposition}\label{row col rank}
For $x=1/q$ where $q$ is a prime power,  for any integers $t,n$ with $n \ge 0,  n+t \ge 0$
\begin{equation}\label{shift identity}
\Xn_t =^d X^{(n+t)}_{-t} -t.
\end{equation}
\end{proposition}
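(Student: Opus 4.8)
The plan is to read off both sides of \eqref{shift identity} as ``$n$ minus the rank of a random matrix'' using Proposition \ref{prop X q n}, and then to invoke the transpose-invariance of rank.

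First I would dispose of the two degenerate situations in which Proposition \ref{prop X q n} does not literally apply (it requires $n \ge 1$ and $n+m \ge 0$). If $n = 0$, the hypothesis forces $t \ge 0$; then $\bX^{(0)} = \phi(\bzero)$ is the deterministic zero-delay countdown process of \eqref{zero delay}, so $X^{(0)}_t = 0$, while on the right $X^{(t)}_{-t}$ is the starting value $t$ of the Markov chain of Proposition \ref{prop X}, whence $X^{(t)}_{-t} - t = 0$ too. If instead $n + t = 0$, then $\Xn_{-n}$ is the starting value $n$ of its chain, and the right side is $X^{(0)}_{n} - (-n) = 0 + n = n$; so \eqref{shift identity} holds (with equality, not merely in distribution) in both cases.

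Now assume $n \ge 1$ and $n+t \ge 1$. Proposition \ref{prop X q n} applied with $m = t$ gives $\Xn_t =^d n - \mathrm{rank}(M)$ for $M$ uniform over $n$ by $n+t$ matrices over $\BF_q$. Applied instead with the symbol ``$n$'' of that proposition taken to be $n+t$ and its ``$m$'' taken to be $-t$ --- the constraints $n+t \ge 1$ and $(n+t) + (-t) = n \ge 0$ both hold --- it gives $X^{(n+t)}_{-t} =^d (n+t) - \mathrm{rank}(M')$ for $M'$ uniform over $n+t$ by $n$ matrices over $\BF_q$, hence $X^{(n+t)}_{-t} - t =^d n - \mathrm{rank}(M')$. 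Finally, transposition is a rank-preserving bijection from $n$ by $n+t$ matrices to $n+t$ by $n$ matrices that carries the uniform distribution to the uniform distribution, so $\mathrm{rank}(M) =^d \mathrm{rank}(M')$ and therefore $n - \mathrm{rank}(M) =^d n - \mathrm{rank}(M')$. Combining the three distributional identities yields \eqref{shift identity}.

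I do not anticipate a real obstacle; the only delicate point is the bookkeeping of which index plays the role of ``$n$'' and which of ``$m$'' in the second application of Proposition \ref{prop X q n}, together with checking the hypotheses there --- which is precisely why the cases $n = 0$ and $n + t = 0$ must be peeled off first. One could alternatively attempt a direct proof from the geometric driving variables $\bZ$ of \eqref{Z}, which would extend \eqref{shift identity} to all $x \in (0,1)$; but that route seems to call for a generating-function or partition-conjugation argument, and is far less transparent than the one-line transpose argument available when $x = 1/q$.
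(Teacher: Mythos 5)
Your proof is correct and takes essentially the same approach as the paper, which translates both sides to ``$n$ minus the rank of a random matrix'' via Proposition \ref{prop X q n} and invokes row rank equals column rank. The only difference is that you explicitly peel off the boundary cases $n=0$ and $n+t=0$ where Proposition \ref{prop X q n} does not literally apply; the paper elides this (its argument implicitly works for degenerate matrix shapes), so your extra care is a minor sharpening, not a different route.
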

\begin{proof}
Consider a random $n$ by $n+t$ matrix $M$ over $\BF_q$.   We exploit the fundamental result that row rank equals column rank.
\begin{eqnarray*}
\Xn_t    & =^d &n - \text{ column rank of } M \\
  & = &  n - \text{ row rank of } M \\
  & = &  (n+t - \text{ column rank of } M^T) - t \\
  & =^d & X^{(n+t)}_{-t} -t.
\end{eqnarray*}  
\end{proof}

\begin{theorem}\label{thm easy 1}
For any prime power $q$, for any  $m \ge 0$,  for every $n \ge 1$,  the total variation distance, between the  $\cQ_{q,n} :=$ ($n$ minus the 
rank of a random $n$ by $n+m$ matrix over $\BF_q$),  and  $\cQ_q := $the distributional limit upon $n \to \infty$ of $\cQ_{q,n} $, satisfies
$$
\dtv(\cQ_{q,n},\cQ_{q})  \le \frac{q}{q-1} \ \frac{1}{q^{n+m+1}}.
$$
\end{theorem}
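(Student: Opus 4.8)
The plan is to re-express the statement in the countdown-process language of Section~\ref{sect motivation} and then exploit the transpose symmetry recorded in Proposition~\ref{row col rank}. Put $x = 1/q$. By Proposition~\ref{prop X q n} one has $\cQ_{q,n} =^d \Xn_m$, and by Proposition~\ref{prop X q} one has $\cQ_q =^d X_m$, so the task reduces to bounding $\dtv(\Xn_m, X_m)$. Applying Corollary~\ref{cor functional} directly, with $t = m$, only delivers $\dtv(\Xn_m, X_m) \le x^{n+1}/(1-x) = \frac{q}{q-1}\,q^{-(n+1)}$, which is weaker than the claim by a factor of roughly $q^m$. The guiding observation is that the bound $u(\cdot)$ of Corollary~\ref{cor functional} improves as its truncation parameter increases, so we should arrange for the truncation level to be $n+m$ rather than $n$; transposition is exactly the device that raises it.

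Concretely, I would first invoke Proposition~\ref{row col rank} with $t = m \ge 0$ (the hypotheses $n \ge 1$ and $n + m \ge 0$ plainly hold), which gives $\Xn_m =^d X^{(n+m)}_{-m} - m$. Letting $n \to \infty$ here is legitimate, since $\Xn_{-m} \to X_{-m}$ in distribution as $n\to\infty$ by the coupling estimate $\p(\Xn_{-m}\ne X_{-m})\le\p(\bZ\ne\bZn)\to 0$ behind Proposition~\ref{prop X q}; this yields $X_m =^d X_{-m} - m$. Because $y \mapsto y - m$ is a deterministic bijection of $\BZ$, it preserves total variation distance, and therefore
\[
\dtv(\Xn_m,\, X_m) \;=\; \dtv\!\bigl(X^{(n+m)}_{-m} - m,\ X_{-m} - m\bigr) \;=\; \dtv\!\bigl(X^{(n+m)}_{-m},\ X_{-m}\bigr).
\]
To finish, apply Corollary~\ref{cor functional} once more, now with truncation parameter $n+m \ge 1$ and coordinate $t = -m$: this gives $\dtv\!\bigl(X^{(n+m)}_{-m}, X_{-m}\bigr) \le u(n+m) \le x^{n+m+1}/(1-x)$, and with $x = 1/q$ the quantity $x^{n+m+1}/(1-x)$ equals $\frac{q}{q-1}\cdot\frac{1}{q^{n+m+1}}$, which is precisely the asserted bound.

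I do not anticipate any real obstacle: the whole argument is a couple of lines of bookkeeping on top of Corollary~\ref{cor functional} and the row-rank-equals-column-rank identity, which is why the result is billed as ``easy''. The only point deserving a word of care is the passage to the limit in Proposition~\ref{row col rank}, i.e.\ the identity $X_m =^d X_{-m} - m$; but this is immediate from the coupling bound already underlying Proposition~\ref{prop X q}, together with the trivial invariance of total variation distance under a deterministic shift.
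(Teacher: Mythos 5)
Your proof is correct and follows essentially the same route as the paper: translate to $\Xn_m$ versus $X_m$ via Propositions~\ref{prop X q n} and~\ref{prop X q}, use transposition (Proposition~\ref{row col rank}) to raise the effective truncation index from $n$ to $n+m$, and then apply the bound of Corollary~\ref{cor functional}. The only difference is that you spell out explicitly the shift-invariance of total variation distance and the limiting identity $X_m =^d X_{-m} - m$ (which the paper records separately as Corollary~\ref{time reversal symmetry}), details the paper's one-line proof leaves to the reader.
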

\begin{proof}
Combine Propositions \ref{prop X q n} and \ref{prop X q}, together   with 
Proposition \ref{row col rank}  and the bound \eqref{marginal upper bound} from Corollary
\ref{cor functional}
applied at $t=m$.  Note that we are using $x=1/q$,  so that $1/(1-x) = q/(q-1)$.
\end{proof}

\begin{theorem}\label{thm easy 2}
Exactly as in Theorem \ref{thm easy 1},  \emph{except} that now we take $m < 0$.  For every  $n$ with
$n+m \ge 0$, 
$$
\dtv(\cQ_{q,n},\cQ_{q})  \le \frac{q}{q-1} \ \frac{1}{q^{n+1}}.
$$
\end{theorem}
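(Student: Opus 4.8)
The plan is to notice that, unlike in the proof of Theorem~\ref{thm easy 1}, here one should \emph{not} pass through the transpose identity of Proposition~\ref{row col rank}: for $m<0$, applying $\Xn_m =^d X^{(n+m)}_{-m}-m$ would both force the extra hypothesis $n+m\ge 1$ and replace the target bound's $1/q^{n+1}$ by the \emph{larger} quantity $1/q^{n+m+1}$. So the right move is to apply the $t$-independent marginal bound of Corollary~\ref{cor functional} directly, at $t=m$.

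Concretely, I would first invoke Propositions~\ref{prop X q n} and \ref{prop X q} (with $x=1/q$) to identify the two laws in question with marginals of the countdown processes of \eqref{X}, namely $\cQ_{q,n} =^d \Xn_m$ and $\cQ_q =^d X_m$, so that $\dtv(\cQ_{q,n},\cQ_q)=\dtv(\Xn_m, X_m)$. Since $m<0$ together with $n+m\ge 0$ gives $n\ge -m\ge 1$, Proposition~\ref{prop X q n} does apply, so this identification is legitimate.

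Next I would apply inequality \eqref{marginal upper bound} of Corollary~\ref{cor functional} --- which holds for \emph{every} $t\in\BZ$, in particular for $t=m<0$ --- to get
$$
\dtv(\Xn_m, X_m)\ \le\ u(n)\ =\ 1-\prod_{i=n+1}^\infty(1-x^i)\ \le\ \frac{x^{n+1}}{1-x}.
$$
Finally, substituting $x=1/q$ turns $1/(1-x)$ into $q/(q-1)$ and $x^{n+1}$ into $1/q^{n+1}$, yielding $\dtv(\cQ_{q,n},\cQ_q)\le \frac{q}{q-1}\cdot\frac{1}{q^{n+1}}$, as claimed.

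There is no real obstacle: the content is the recognition that, in the regime $m<0$, the cruder but $t$-independent bound $u(n)$ beats anything obtainable from the row-rank/column-rank trick, plus the trivial bookkeeping check that $n\ge 1$ is automatic. This is also the structural reason why $m<0$ must be stated as a separate case from Theorem~\ref{thm easy 1}.
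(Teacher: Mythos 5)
Your proposal is correct and takes essentially the same route as the paper: the paper's proof of this theorem is simply the proof of Theorem~\ref{thm easy 1} with Proposition~\ref{row col rank} omitted, i.e.\ invoking Propositions~\ref{prop X q n} and~\ref{prop X q} to identify $\cQ_{q,n}$ and $\cQ_q$ with $\Xn_m$ and $X_m$, and then applying the $t$-independent bound \eqref{marginal upper bound} from Corollary~\ref{cor functional} at $t=m$ with $x=1/q$. Your added remarks on why the transpose identity should be bypassed for $m<0$ are accurate but amount to extra exposition of what the paper states tersely.
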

\begin{proof}
Just as the proof of Theorem \ref{thm easy 1},  except that we do not invoke
 Proposition \ref{row col rank}.
\end{proof}

 \ignore{ % begin ignore
 \subsection{Integer partitions}
 
 A \emph{partition} is a nondecreasing sequence of positive integers $\lambda =(\lambda_1,\lambda_2,\dots,\lambda_k)$, $\lambda_1\ge \lambda_2\ge\dots \lambda_k>0$. Each component $\lambda_i$ is a \emph{part} of $\lambda$, so we say that $\lambda$ has $k$ parts. If $\sum_{i=1}^k\lambda_i=n$, we say that $\lambda$ is a partition of $n$. 
 
 Given  a partition $\lambda$, its  conjugate partition $\lambda^*=(\lambda^*_1,\lambda^*_2,\dots,\lambda^*_h)$ is given by 
 $$
 \lambda^*_i = \#\{j:\lambda_j \ge i\}=\sum_{j=1}^k 1(\lambda_j\ge i).
 $$
 An important observation is that $\lambda$ has at most $k$ parts if and only if every part of $\lambda^*$ is at most $k$.
 
 The number of partitions of $n$ into $k$ parts is denoted $p(n,k)$. This notation is standard, while the following is not. Given a subset $S\subseteq \BN$ of the positive integers, let $p(n,k;S)$ denote the number of partitions of $n$ into $k$ parts which satisfy $\lambda_i\in S$ for all $i= 1,2\dots,k$. Finally, we let 
 $$
 p(n):= \sum_k p(n,k), \ \ p(n;S) := \sum_k p(n,k;S).
 $$
 
 The generating function relation 
 \begin{equation}\label{pnkS}
\sum_{n,k\ge 0} p(n,k;S)x^ny^k = \prod_{i\in S} \frac1{1-yx^i}
\end{equation}
has been considered ``obvious'' since Euler; it is now viewed as an equality of formal power series, see for example \cite[(3.16.4)]{Wilf}. The relation \eqref{pnkS} holds as an equality of real analytic functions on the region $|x|<1$ and $|xy|<1$. In particular, when $y=1$, this becomes
\begin{equation}\label{yis1}
\sum_{n \ge 0} p(n;S)x^n = \prod_{i\in S} \frac1{1-x^i}.
\end{equation}

\begin{lemma}\label{genfunfu}
$$
\prod_{i=m}^n \frac1{1-yx^i}=\sum_{k=0}^\infty y^k x^{km}\prod_{i=1}^k \frac1{1-x^i}.
$$
\end{lemma}
\begin{proof}
Let $\BN_n=\{n,n+1,n+2,\dots\}$ be the set of integers which are at least $n$, and let $[n]=\{1,2,\dots,n\}$. By \ref{pnkS}, 
\begin{align*}
\prod_{i\ge n+1} \frac1{1-yx^i}
	&= \sum_{k=0}^\infty y^k \sum_{m=0}^\infty x^mp(m,k;\BN_{n+1})\\
	&= \sum_{k=0}^\infty y^k \sum_{m=0}^\infty x^mp(m-k(n+1);[k]).
\end{align*}
To justify the last line, we present a bijective proof that
\begin{equation}
p(m,k;\BN_{n+1})=p(m-k(n+1);[k]).
\end{equation}
Given a partition $\lambda$ of $m$ into $k$ parts, all of which are at least $n+1$, let $\mu$ be a partition attained by subtracting $n+1$ from each part of $\lambda$, deleting any zero parts, then conjugating the result. It follows that $\mu$ is a partition of $m-k(n+1)$, and all parts of $\mu$ are at most $k$. Since the process is reversible (first conjugate, then add $n+1$ to each part), this is a bijection.

Therefore,
\begin{align*}
\prod_{i\ge n+1} \frac1{1-yx^i}
	&= \sum_{k=0}^\infty y^k \sum_{m=0}^\infty x^mp(m-k(n+1);[k])\\
	&= \sum_{k=0}^\infty y^kx^{k(n+1)} \sum_{m=0}^\infty x^{m-k(n+1)}p(m-k(n+1);[k])\\
	&= \sum_{k=0}^\infty y^kx^{k(n+1)} \sum_{m=0}^\infty x^{m}p(m;[k])\\
	&= \sum_{k=0}^\infty y^kx^{k(n+1)} \prod_{i=1}^k \frac1{1-x^i},
\end{align*}
where the last equality follows from \eqref{yis1}, using $S=[k]$.

\end{proof}
} % end ignore

 \section{A technical lemma}
 
 For the computations in the next section, we will require the following fact. 
 
 \begin{lemma}\label{technical} For all $n\ge m\ge 0$, and $|x|<1,|y|<1$, 
$$
\prod_{i=m}^n\frac1{1-yx^i}=\sum_{k\ge0} y^kx^{mk}\cdot \frac{\prod_{i=n-m+1}^{n-m+k}1-x^i}{\prod_{i=1}^k1-x^i}.
$$
\begin{proof}
It suffices to prove this in the case $m=0$, namely to show that
\begin{equation}\label{simpler}
\prod_{i=0}^n\frac1{1-yx^i}=\sum_{k\ge0} y^k\cdot \frac{\prod_{i=k+1}^{n+k}1-x^i}{\prod_{i=1}^k1-x^i},
\end{equation}
since the general result follows from replacing $n$ with $n-m$ and $y$ with $yx^m$ in  \eqref{simpler}.

Letting $F_n(x,y)$ denote the right hand side of \eqref{simpler}, elementary power series manipulations obtain that $(1-yx^n)F_n(x,y) = F_{n-1}(x,y)$, which implies
$$
F_n(x,y) =\frac1{1-yx^n}F_{n-1}(x,y).
$$
Iterating the latter relation $n$ times yields that $$F_n(x,y) = \left(\prod_{i=1}^n\frac1{1-yx^i}\right)F_0(x,y),$$ which combined with the base case $F_0(x,y) = \sum_{k\ge0} y^k=\frac1{1-y}$ proves \eqref{simpler}.

\end{proof}
\end{lemma}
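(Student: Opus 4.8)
The plan is to reduce the general identity to the special case $m=0$, then prove that special case by a generating-function recursion in $n$. The reduction is purely formal: in the claimed identity
$$
\prod_{i=m}^n\frac1{1-yx^i}=\sum_{k\ge0} y^kx^{mk}\cdot \frac{\prod_{i=n-m+1}^{n-m+k}(1-x^i)}{\prod_{i=1}^k(1-x^i)},
$$
substitute $n \mapsto n-m$ and $y \mapsto yx^m$ into the $m=0$ version, which reads
$$
\prod_{i=0}^n\frac1{1-yx^i}=\sum_{k\ge0} y^k\cdot \frac{\prod_{i=k+1}^{n+k}(1-x^i)}{\prod_{i=1}^k(1-x^i)}.
$$
Under this substitution the left side becomes $\prod_{i=0}^{n-m}\frac1{1-yx^{m+i}}=\prod_{j=m}^{n}\frac1{1-yx^j}$, and the factor $y^k$ becomes $y^kx^{mk}$ while the product ratio has upper and lower limits shifted exactly into the form $\prod_{i=(n-m)+1}^{(n-m)+k}(1-x^i)/\prod_{i=1}^k(1-x^i)$. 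So it suffices to establish the displayed $m=0$ identity.

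For the $m=0$ case I would set $F_n(x,y)$ equal to the right-hand side and prove the recursion $(1-yx^n)F_n(x,y)=F_{n-1}(x,y)$ for $n\ge 1$. Writing out $F_n - F_{n-1}$ termwise, the $k$-th coefficient of $F_n$ is $y^k\prod_{i=k+1}^{n+k}(1-x^i)/\prod_{i=1}^k(1-x^i)$ and of $F_{n-1}$ is $y^k\prod_{i=k+1}^{n-1+k}(1-x^i)/\prod_{i=1}^k(1-x^i)$; their difference pulls out the common block $\prod_{i=k+1}^{n+k-1}(1-x^i)/\prod_{i=1}^k(1-x^i)$ times $\big((1-x^{n+k})-1\big)=-x^{n+k}$. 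Thus
$$
F_n - F_{n-1} = -\sum_{k\ge0} y^k x^{n+k}\cdot\frac{\prod_{i=k+1}^{n+k-1}(1-x^i)}{\prod_{i=1}^k(1-x^i)}.
$$
On the other hand $-yx^nF_n = -\sum_{k\ge 0} y^{k+1}x^n \prod_{i=k+1}^{n+k}(1-x^i)/\prod_{i=1}^k(1-x^i)$; reindexing $k+1\mapsto k$ turns the summand into $-y^k x^n \prod_{i=k}^{n+k-1}(1-x^i)/\prod_{i=1}^{k-1}(1-x^i) = -y^k x^{n}\cdot x^{?}\cdots$, and after dividing numerator and denominator by $(1-x^k)$ this matches the previous display term by term (the extra factor $x^k$ appearing because $\prod_{i=k}^{n+k-1}/\prod_{i=1}^{k-1} = \prod_{i=k+1}^{n+k-1}/\prod_{i=1}^{k}\cdot\frac{(1-x^k)\cdot x^{k}\text{-bookkeeping}}{}$). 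This is the one slightly fiddly computation; it is entirely routine but one must handle the reindexing of the $y$-powers and the shift of the product limits carefully, and check the $k=0$ boundary term separately.

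Granting the recursion, iterate $F_n(x,y)=\frac{1}{1-yx^n}F_{n-1}(x,y)$ from $n$ down to $0$ to obtain $F_n(x,y)=\Big(\prod_{i=1}^n\frac1{1-yx^i}\Big)F_0(x,y)$, and compute the base case directly: $F_0(x,y)=\sum_{k\ge0}y^k\cdot\frac{\text{(empty product)}}{\prod_{i=1}^k(1-x^i)}\cdot\text{---}$ wait, more precisely at $n=0$ the numerator $\prod_{i=k+1}^{k}(1-x^i)$ is empty hence $1$, so $F_0 = \sum_{k\ge 0} y^k / \prod_{i=1}^k(1-x^i)$; but iterating the recursion down only needs $F_0=\sum_{k\ge0}y^k = 1/(1-y)$, which is the $n=0$ instance of the target identity itself and can be verified by inspection since the $n=0$ left side is $1/(1-y)$. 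Then $\prod_{i=0}^n\frac1{1-yx^i}=\frac1{1-y}\prod_{i=1}^n\frac1{1-yx^i}\cdot$ — matching $F_n$. The main obstacle, such as it is, is just the bookkeeping in verifying $(1-yx^n)F_n=F_{n-1}$; convergence is a non-issue since for $|x|<1,|y|<1$ every series here is dominated by a geometric series.
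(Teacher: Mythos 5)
Your overall plan (reduce to $m=0$ via $n\mapsto n-m$, $y\mapsto yx^m$, then prove the $m=0$ case by the recursion $(1-yx^n)F_n=F_{n-1}$) is exactly the paper's approach, but your reduction introduces an error that then derails the verification. Setting $m=0$ in the lemma gives numerator $\prod_{i=n+1}^{n+k}(1-x^i)$, i.e.\ the $q$-binomial coefficient $\binom{n+k}{k}_x$; you instead wrote $\prod_{i=k+1}^{n+k}(1-x^i)$. These are not the same (the first has $k$ factors, the second has $n$), and the identity with your range is simply false: at $n=0$ the left side is $1/(1-y)$ while your right side is $\sum_{k\ge 0} y^k/\prod_{i=1}^k(1-x^i)$, which depends on $x$. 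The symptoms are visible in your own writeup: the $k=0$ term of $F_n-F_{n-1}$ is $-x^n\prod_{i=1}^{n-1}(1-x^i)\ne 0$ while $-yx^nF_n$ has no $y^0$ term, the ``$x^?$'' placeholder and ``$x^k$-bookkeeping'' remark are where the reindexing genuinely fails (one would need $(1-x^k)^2=x^k$), and you already noticed that the formula gives $F_0=\sum_k y^k/\prod_{i=1}^k(1-x^i)$ rather than the $1/(1-y)$ you need. Appealing to ``the $n=0$ instance of the target identity'' there is circular, since that instance is precisely what is false with the wrong range.

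With the correct range $c_k^{(n)}:=\prod_{i=n+1}^{n+k}(1-x^i)\big/\prod_{i=1}^k(1-x^i)=\binom{n+k}{k}_x$ everything closes cleanly: $c_k^{(n)}-c_k^{(n-1)}$ pulls out the common block $\prod_{i=n+1}^{n+k-1}(1-x^i)/\prod_{i=1}^k(1-x^i)$ times $\bigl((1-x^{n+k})-(1-x^n)\bigr)=x^n(1-x^k)$, which after cancelling $(1-x^k)$ equals $x^n c_{k-1}^{(n)}$; this is the $q$-Pascal identity $\binom{n+k}{k}_x=\binom{n+k-1}{k}_x+x^n\binom{n+k-1}{k-1}_x$, so $(1-yx^n)F_n=F_{n-1}$ holds termwise including at $k=0$ (both sides vanish there since $c_0^{(n)}=c_0^{(n-1)}=1$). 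And $F_0=\sum_k y^k\cdot 1=1/(1-y)$ is immediate, not circular. So the argument structure is right and recoverable, but you must fix the numerator range before the recursion step is valid.
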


\begin{remark}\label{partition remark}
A more conceptual proof of this result can be given, relating the result to random integer partitions, where a partition of $r$ is given weight $x^r$;   see \cite{andrews1974, fristedt,IPARCS, PDC}. (In contrast with the linear algebra applications involving $x=1/q$, bounded away from 1,  taking $x=\exp(-\pi/\sqrt{6r})$ leads to excellent approximations for a random partition of a large  integer $r$;  see \cite{PittelShape}.) In more detail,  $Z_i$  is interpreted as the number of parts of size $i$,  so that $r=\sum i Z_i$ is the size of the partition, and $k= \sum Z_i$ is the number of parts.   We are considering partitions  where all part sizes lie in the range  $m$ to $n$,  and such a partition $\lambda$ of size $r$, with exactly $k$ parts, is in bijective correspondence with a partition $\lambda'$ of $r-km$ with at most $k$ parts, each of size at most $n-m$,
by removing $m$  from each part of $\lambda$.
\end{remark}
 
 \section{Distributional Results}
 
 We compute the distributions of $\Xn_t$, and $X_t$, for all for $x \in (0,1)$, $n \ge 0$, and $t\in \BZ$.
 The explicit formulas lead to an interesting symmetry, stated in Corollary  
 \ref{time reversal symmetry}, which in case $x=1/q$, where $q$ is a prime power, was already proved, in Proposition \ref{row col rank}.

  \begin{lemma}\label{rdist} For all $x \in (0,1)$, for all $k\in \BZ^+$, and $n\ge m> 0$,
 $$
 	\p(S_n - S_{m-1}=k) = x^{mk}\cdot  \frac{\prod_{i=n-m+1}^{n-m+k}(1-x^i)\prod_{i=m}^n (1-x^i)}{\prod_{i=1}^{k}(1-x^i)}.
 $$
 \end{lemma}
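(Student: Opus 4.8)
The plan is to compute the probability generating function of $S_n - S_{m-1} = Z_m + Z_{m+1} + \cdots + Z_n$ and then read off the coefficient of $y^k$ using the technical Lemma~\ref{technical}. Since the $Z_i$ are independent with $\p(Z_i \ge j) = x^{ij}$, each $Z_i$ is geometric with $\e y^{Z_i} = \sum_{j \ge 0}(1-x^i)x^{ij}y^j = \frac{1-x^i}{1-yx^i}$ (valid for $|yx^i| < 1$, in particular for $|y| < 1$ since $|x| < 1$). Therefore, by independence,
\begin{equation*}
\e y^{\,S_n - S_{m-1}} \ = \ \prod_{i=m}^n \e y^{Z_i} \ = \ \prod_{i=m}^n \frac{1-x^i}{1-yx^i} \ = \ \left(\prod_{i=m}^n (1-x^i)\right) \prod_{i=m}^n \frac{1}{1-yx^i}.
\end{equation*}

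Next I would apply Lemma~\ref{technical} to rewrite $\prod_{i=m}^n \frac{1}{1-yx^i}$ as the power series $\sum_{k \ge 0} y^k x^{mk} \cdot \frac{\prod_{i=n-m+1}^{n-m+k}(1-x^i)}{\prod_{i=1}^k (1-x^i)}$. Multiplying by the constant factor $\prod_{i=m}^n(1-x^i)$ gives
\begin{equation*}
\e y^{\,S_n - S_{m-1}} \ = \ \sum_{k \ge 0} y^k \, x^{mk} \cdot \frac{\prod_{i=n-m+1}^{n-m+k}(1-x^i)\,\prod_{i=m}^n(1-x^i)}{\prod_{i=1}^k(1-x^i)}.
\end{equation*}
Since $S_n - S_{m-1}$ takes values in $\BZ_+$, its generating function is $\sum_k \p(S_n - S_{m-1} = k) y^k$, and matching coefficients of $y^k$ yields the claimed formula. (For $k = 0$ the numerator product $\prod_{i=n-m+1}^{n-m}$ is empty, equal to $1$, consistent with $\p(S_n - S_{m-1} = 0) = \prod_{i=m}^n(1-x^i)$.)

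I do not anticipate a serious obstacle here; the proof is essentially a one-line application of Lemma~\ref{technical} once the generating-function identity for the geometric variables is in place. The only points needing a word of care are: (i) justifying that the termwise product/sum rearrangement and the identification of coefficients are legitimate, which follows because all series in question converge absolutely for $|y|$ small (both $\prod_{i=m}^n \frac{1}{1-yx^i}$ and the series in Lemma~\ref{technical} are analytic near $y=0$), and a probability generating function of a $\BZ_+$-valued random variable is determined by its Taylor coefficients at $0$; and (ii) noting the hypothesis $n \ge m > 0$ matches the hypothesis $n \ge m \ge 0$ of Lemma~\ref{technical} after the harmless reindexing, and that the statement is asserted for $k \in \BZ^+$ while the $k=0$ case is the trivial normalization. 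Alternatively, and perhaps more in the spirit of Remark~\ref{partition remark}, one could give a direct combinatorial argument: $\p(Z_m + \cdots + Z_n = k)$ is $(\prod_{i=m}^n(1-x^i))$ times the generating function $\sum_r x^r \,(\text{number of partitions of } r \text{ with exactly } k \text{ parts, all of size in } [m,n])$, and the bijection subtracting $m$ from each part converts this to partitions of $r - km$ with at most $k$ parts each of size at most $n-m$, whose generating function is the Gaussian binomial coefficient $\binom{n-m+k}{k}_x = \frac{\prod_{i=n-m+1}^{n-m+k}(1-x^i)}{\prod_{i=1}^k(1-x^i)}$; but routing through Lemma~\ref{technical} is cleaner and self-contained.
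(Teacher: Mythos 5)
Your proof is correct and is essentially identical to the paper's: both compute the probability generating function $\prod_{i=m}^n \frac{1-x^i}{1-yx^i}$ for the independent geometric sum $S_n - S_{m-1}$, expand $\prod_{i=m}^n(1-yx^i)^{-1}$ via Lemma~\ref{technical}, and read off the coefficient of $y^k$. The alternative combinatorial route you sketch mirrors the paper's Remark~\ref{partition remark} but is not needed.
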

\begin{proof} Let $G(s):=\e[s^{(S_n-S_{m-1})}]$ be the probability generating function for $S_n-S_{m-1}$, so $G(s)=\sum_{k=0}^\infty \p(S_n-S_{m-1}=k)s^k$. Since $S_n-S_{m-1}$ is a sum $Z_{m}+Z_{m+1}+\dots+Z_n$ of independent geometric random variables, each with probability generating function $\e[s^{Z_i}]=\frac{1-x^i}{1-sx^i}$, it follows that
$$
	\sum_{k=0}^\infty \p(S_n-S_{m-1}=k)s^k=G(s) = \prod_{i=m}^n \frac{1-x^i}{1-sx^i}=
	\frac{g_n(x)}{g_{m-1}(x)}
	\prod_{i=m}^n \frac1{1-sx^i},
$$
where $g_n$ is given by \eqref{g n}. Using Lemma \ref{technical}, we can rewrite the product on the right hand side of the previous equation as
$$
	\frac{g_n(x)}{g_{m-1}(x)}\prod_{i=m}^n \frac1{1-sx^i} 
	= \frac{g_n(x)}{g_{m-1}(x)}\sum_{k\ge0}s^kx^{mk} \frac{\prod_{i=n-m+1}^{n-m+k}1-x^i}{\prod_{i=1}^{k}1-x^i}.
$$
Finally, combining the last two equations, then equating the coefficients of $s^k$, proves the lemma.
\end{proof}

\begin{corollary}\label{Sn is logconcave} The distribution of $S_n$ is logconcave.
\end{corollary}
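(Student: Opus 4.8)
The plan is to extract an explicit formula for the point probabilities of $S_n$ from Lemma \ref{rdist} and then check that the ratio of consecutive probabilities is non-increasing. Taking $m=1$ in Lemma \ref{rdist}, so that $S_n-S_{m-1}=S_n-S_0=S_n$, yields
$$
\p(S_n=k)=\left(\prod_{i=1}^n(1-x^i)\right)\,x^k\,\frac{\prod_{i=n}^{n+k-1}(1-x^i)}{\prod_{i=1}^{k}(1-x^i)},\qquad k=0,1,2,\ldots,
$$
with the empty products making the case $k=0$ the trivial $\p(S_n=0)=\prod_{i=1}^n(1-x^i)$. Since $x\in(0,1)$, every factor $1-x^i$ lies in $(0,1)$, so $\p(S_n=k)>0$ for all $k\ge0$; hence the support of $S_n$ is the contiguous set $\BZ_+$ and no ``internal zero'' subtlety arises. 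Logconcavity is therefore exactly the assertion that $r_k:=\p(S_n=k)/\p(S_n=k-1)$ is non-increasing in $k\ge1$, and cancelling the common factors in the formula above leaves the clean expression
$$
r_k=x\,\frac{1-x^{n+k-1}}{1-x^k}.
$$

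The one substantive step is to verify that $k\mapsto r_k$ is non-increasing. I would do this via the substitution $u=x^k\in(0,1)$: writing $r_k=g(u)$ with $g(u)=x\,(1-x^{n-1}u)/(1-u)$, a one-line differentiation gives $g'(u)=x(1-x^{n-1})/(1-u)^2\ge0$, so $g$ is non-decreasing on $(0,1)$. Since $u=x^k$ is strictly decreasing in $k$, the sequence $r_k$ is non-increasing, which is precisely the logconcavity of $S_n$. The inequality is strict for $n\ge2$ and degenerates to equality when $n=1$ (indeed $S_1=Z_1$ is geometric, so its mass function is log-linear), while $n=0$ is the trivial $S_0\equiv0$.

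There is no serious obstacle here: the only content is the elementary monotonicity check, which the simplified form of $r_k$ makes transparent. As an alternative, more structural route one could avoid Lemma \ref{rdist} altogether and observe that $S_n=Z_1+\cdots+Z_n$ is a sum of independent geometric random variables, each with a logconcave (in fact log-linear) distribution on $\BZ_+$, together with the classical fact that a convolution of logconcave sequences supported on integer intervals is again logconcave; the trade-off is that this relies on invoking that convolution lemma rather than the explicit computation.
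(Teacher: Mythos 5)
Your proof is correct and matches the paper's approach: both specialize Lemma \ref{rdist} to get the explicit formula $\p(S_n=k)=x^k(1-x^n)\prod_{i=k+1}^{n+k-1}(1-x^i)$, form the consecutive-probability ratio $x(1-x^{n+k-1})/(1-x^k)$, and deduce log-concavity from that ratio being non-increasing in $k$. The paper merely asserts that one can verify the required inequality, whereas your substitution $u=x^k$ turns it into a one-line differentiation; the aside about convolving log-concave (log-linear) geometrics is a genuinely different, more structural route, but it is not the one the paper takes.
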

\begin{proof}Using Theorem \ref{rdist},  for $k \ge 0$, 
$$
\P(S_n=k)=x^k(1-x^n)\prod_{i=k+1}^{n+k-1}(1-x^i).
$$
Cancellation of some common factors leads to
\begin{equation}\label{S ratio}
\frac{\p(S_n=k+1)}{\p(S_n=k)} = x  \ \frac{1-x^{n+k}}{1-x^{k+1}}.
\end{equation}

Using \eqref{S ratio}, one can verify that
$$
%\frac{\P(S_n=k+2)/\P(S_n=k+1)}{\P(S_n=k+1)/\P(S_n=k)}\le 1
%  just for  eye sake
\p(S_n=k+1)^2  \ge \p(S_n=k) \, \p(S_n=k+2)
$$
holds for all $k\in \BZ$, which %for random variables with support $\BZ_+$ 
is precisely the condition  that the distribution is log concave.
%, and therefore the distribution of $S_n$ is unimodal.

\end{proof}

\begin{remark}

In the special case  $x=1/q$ where $q$ is a prime power,
the product formula \eqref{product form} for distribution of $\Xn_t$, given below in Theorem \ref{xdist},
governs the number of rectangular matrices of a given rank over $\BF_q$,  and this case can be traced back to 1893 
\cite{landsberg}; it also appears as \cite[p. 157, problem 192b]{stanley}. 

The product formula has an easy combinatorial proof. Recall that the \emph{q-binomial} coefficients, defined by
$$
\binom{n}{k}_q=\prod_{i=0}^{k-1} \frac{1-q^{n-i}}{1-q^{i+1}},
$$
give the number of $k$-dimensional subspaces of $\BF_q^n$.
 Consider an $n\times (n+t)$ matrix with rank $n-k$ as a linear map from $\BF_q^{n+t}$ to $\BF_q^n$. There are $\binom{n+t}{t+k}_q$ choices for the $(t+k)$-dimensional kernel $K$ of this map, $\binom{n}{n-k}_q$ choices for the $(n-k)$-dimensional image, $I$, and there are $\prod_{i=0}^{n-k-1}(q^{n-k}-q^i)$ ways to specify the nonsingular linear transformation from a fixed complement of $K$ to $I$. This proof is due to Dennis Stanton, private communication. 

For symmetric, skew-symmetric, and Hermitian matrices, there are analogous product formulas for the number of matrices of a given rank, due to Carlitz and Hodges,   see \cite[page 661]{stanton}.

It would seem natural that there should be a transfer principle,  so that knowing the result for $x=1/q$ implies the result for all $x \in (0,1)$,   but we don't know of such a principle. 
Such a transfer principle would also apply to reflection symmetry, allowing Proposition \ref{row col rank} to imply Corollary \ref{time reversal symmetry}.
 We %also 
 believe that our proof of Theorem \ref{xdist}, exploiting the Markov property of the countdown process, has both simplicity and novelty.
\end{remark}

\begin{question}
Is there a transfer principle,  allowing results for the Markov chain defined by \eqref{transition x} with parameter $x \in (0,1)$ to be deduced, with no extra computation,  from the combinatorial and linear algebraic results corresponding to the cases $x=1/q$ where $q$ must be a prime power?
\end{question}
 
 \begin{theorem}\label{xdist}
 For all $x \in (0,1)$, for all $n\ge k \ge 0$, and for all $t\ge -k$,
\begin{equation}
\label{product form}
\p(\Xn_t=k)=x^{k(t+k)}\cdot \frac{\prod_{i=n-k+1}^{n+t}(1-x^i)\prod_{i=k+1}^n (1-x^i)}{\prod_{i=1}^{t+k}(1-x^i)},
\end{equation}
and 
$$
\p(X_t=k) = x^{k(t+k)}\cdot \frac{\prod_{i=k+1}^\infty (1-x^i)}{\prod_{i=1}^{t+k}(1-x^i)}.%\hspace{2.5cm}
$$
 \end{theorem}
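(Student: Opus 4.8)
The plan is to read off $\p(\Xn_t=k)$ directly from the pure-death Markov chain description, then obtain $\p(X_t=k)$ by letting $n\to\infty$.

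\textbf{Step 1 (reduction to a path count).} By Proposition~\ref{prop X}, the segment $\Xn_{-n},\Xn_{-n+1},\dots$ is the Markov chain \eqref{transition x} started from $n$, so $\{\Xn_t=k\}$ is the event that this chain, after $n+t$ steps, sits at state $k$; note $n+t\ge n-k\ge 0$ under the hypotheses $n\ge k$, $t\ge -k$. Since the chain only moves down, any trajectory realizing $\Xn_t=k$ passes through exactly the states $n,n-1,\dots,k$, spending a contiguous block of times at each. Writing $d_i\ge 0$ for the number of ``holds'' (steps $i\to i$) made at level $i$, for $i=k,k+1,\dots,n$, such trajectories are in bijection with tuples $(d_k,\dots,d_n)\in\BZ_+^{\,n-k+1}$ satisfying $d_k+\cdots+d_n=t+k$: of the $n+t$ total steps, exactly $n-k$ are jumps (one out of each of $n,n-1,\dots,k+1$), leaving $t+k$ holds. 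By \eqref{transition x} and independence of successive steps, the trajectory for $(d_k,\dots,d_n)$ has probability $\bigl(\prod_{i=k+1}^n(1-x^i)\bigr)\,x^{\,kd_k+(k+1)d_{k+1}+\cdots+nd_n}$, the product recording the $n-k$ jumps (there is no jump out of level $k$). Summing over tuples,
\[
\p(\Xn_t=k)=\Bigl(\prod_{i=k+1}^n(1-x^i)\Bigr)\sum_{\substack{d_k,\dots,d_n\ge 0\\ d_k+\cdots+d_n=t+k}} x^{\,kd_k+(k+1)d_{k+1}+\cdots+nd_n}.
\]

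\textbf{Step 2 (evaluate the sum and pass to the limit).} The inner sum is the coefficient of $s^{t+k}$ in $\prod_{i=k}^n(1-sx^i)^{-1}$, and Lemma~\ref{technical} (with $m\mapsto k$, $y\mapsto s$) identifies this coefficient as $x^{k(t+k)}\prod_{i=n-k+1}^{n+t}(1-x^i)\big/\prod_{i=1}^{t+k}(1-x^i)$; substituting yields \eqref{product form}. The degenerate cases $k=n$ (no jumps; the single tuple $d_n=t+n$) and $k=0$ (level $0$ is the floor, again no jump out of it) are handled by the same computation under the empty-product convention. For the second formula, let $n\to\infty$: the coupling \eqref{X} gives $\p(X_t\ne\Xn_t)\le\p(\bX\ne\bXn)=\p(R_n>0)\to 0$, so $\p(X_t=k)=\lim_{n}\p(\Xn_t=k)$; in \eqref{product form} the factor $\prod_{i=n-k+1}^{n+t}(1-x^i)$ is a fixed number ($t+k$) of terms each tending to $1$, while $\prod_{i=k+1}^n(1-x^i)\to\prod_{i=k+1}^\infty(1-x^i)$, which gives the stated expression for $\p(X_t=k)$.

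\textbf{Main obstacle.} The substance is essentially bookkeeping: checking that the number of non-jump steps is exactly $t+k$ (this is precisely where the hypothesis $t\ge -k$ is used), and matching the index ranges produced by Lemma~\ref{technical} to those appearing in \eqref{product form}. The combinatorial heart --- that a monotone-decreasing trajectory of prescribed length and endpoints is determined by its vector of holding times --- is elementary, so I expect no real difficulty beyond careful index juggling.
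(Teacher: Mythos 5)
Your proof is correct, and it takes a genuinely different route from the paper's, although both ultimately invoke Lemma~\ref{technical}. The paper proves \eqref{product form} for $k\ge 1$ by the conditional-probability identity $\p(\Xn_t=k)=\frac{1}{1-x^k}\p(D_{t,k})$, where $D_{t,k}$ is the event of a death at time $t$ from height $k$; Observation~\ref{deathandtailsums} identifies $D_{t,k}$ with $\{S_n-S_{k-1}=t+k\}$, and Lemma~\ref{rdist} (itself a corollary of Lemma~\ref{technical}) supplies the probability. The case $k=0$ is then handled by a separate induction on $t$, since the survival/death dichotomy degenerates at the floor. You instead perform a direct path decomposition: parameterize trajectories realizing $\{\Xn_t=k\}$ by their holding-time vector $(d_k,\dots,d_n)$, multiply the one-step probabilities, and recognize the resulting sum as the coefficient of $s^{t+k}$ in $\prod_{i=k}^n(1-sx^i)^{-1}$, which Lemma~\ref{technical} evaluates with $m=k$, $y=s$. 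These are analytically equivalent --- your holding-time sum, once the factor $\prod_{i=k+1}^n(1-x^i)$ is absorbed, is just $\frac{1}{1-x^k}\p(S_n-S_{k-1}=t+k)$ again --- but your packaging is more self-contained (no need for Observation~\ref{deathandtailsums} or Lemma~\ref{rdist}) and handles $k=0$ uniformly, avoiding the paper's separate induction. The limiting argument for $\p(X_t=k)$ matches the paper's.
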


\begin{proof}
Recall 
from Proposition \ref{prop X} 
that %$\bXn=    nope,  \bXn  has a different domain
$\{\Xn_t\}_{t=-n}^\infty$ is a time-homogenous Markov process, with transition probabilities
 \begin{equation}\label{transition}
 \p(\Xn_{t+1}=k|\Xn_{t}=k)=x^k,\ \ \p(\Xn_{t+1}=k-1|\Xn_{t}=k)=1-x^k.
 \end{equation}
In case $k \ge 1$, let $D_{t,k}=\{\Xn_t=k,\Xn_{t+1}=k-1\}$ be the event that there is a ``death" at time $t$ and height $k$, and $V_{t,k}=\{\Xn_{t}=k,\Xn_{t+1}=k\}$ be the event of a survival. Provided $k\ge 1$,  \eqref{transition} implies that the ratio of the probabilities of survival to death is given by $\p(V_{t,k})/\p(D_{t,k})=x^k/(1-x^k)$. Combined with the fact that $\{\Xn_{t}=k\}$ is the disjoint union of $D_{t,k}$ and $V_{t,k}$, we get that
\begin{equation}\label{xtod}
\p(\Xn_t=k) %= P(D_{t,k})+P(V_{t,k})=\left(1+\frac{x^k}{1-x^k}\right)P(D_{t,k})
=\frac{1}{1-x^k}\p(D_{t,k}).
\end{equation}
The reason that \eqref{xtod} is useful comes from observation \ref{deathandtailsums}, which implies that $D_{k,t}$ occurs if and only if $t+k=Z_k+Z_{k+1}+Z_{k+2}+\dots+Z_n$, so 
 \begin{equation}\label{dtor}
 \p(D_{t,k})=\p(S_n - S_{k-1}=t+k).
 \end{equation}¥
Combining \eqref{xtod}, \eqref{dtor} and Lemma  \ref{rdist} proves 
\eqref{product form} 
%Theorem \ref{xdist}
 for all $k\ge 1$.

In case $k=0$, we must prove that 
\begin{equation}\label{kiszero}
\P(\Xn_t=0)=\prod_{i=t+1}^{n+t}(1-x^i)
\end{equation}
which we prove by induction on $t$. The base case that $P(\Xn_0=0)=\prod_{i=1}^{n}(1-x^i)$ holds since $\Xn_0=0$ exactly when $Z_1=Z_2=\dots=Z_n=0$. Assuming that \eqref{kiszero} holds for $t-1$, note that $\Xn_{t}=0$ implies that $X_{t-1}$ is either 0 or 1. Then 
\begin{align*}
P(\Xn_{t}=0) 
	&= P(\Xn_{t-1}=0) + P(D_{t-1,1})\\
	&= P(\Xn_{t-1}=0) + P(S_n-S_0=t)\\
	&= \prod_{i=t}^{n+t-1}(1-x^i) 
		+ x^t\cdot \frac{\prod_{i=n}^{n+t-1}(1-x^i)\prod_{i=1}^n(1-x^i)}{\prod_{i=1}^t(1-x^i)}\\
	&= \prod_{i=t}^{n+t-1}(1-x^i) +x^t(1-x^n)\prod_{i=t+1}^{n+t-1}(1-x^i)\\
	&= \(\prod_{i=t+1}^{n+t-1}(1-x^i)\) \cdot \Big((1-x^t)+x^t(1-x^n)\Big)\\
	%&= \(\prod_{i=t+1}^{n+t-1}(1-x^i)\) \cdot( 1-x^{t+n})\\
	&=\prod_{i=t+1}^{n+t}(1-x^i),
\end{align*}¥
completing the proof by induction.

Finally, $\p(X_t=k) = \lim_{n\to\infty} \p(\Xn_t=k)$, since $\Xn_t$ converges to $X_t$ almost surely, and therefore in distribution. 

\end{proof}

 \begin{corollary}\label{time reversal symmetry}
 For all $x \in (0,1)$, for all $n\ge 0$, for all  $t\in \BZ$, 
$$\Xn_t=^d X^{(n+t)}_{-t}-t,
$$ 
and
$$
X_t=^d X_{-t}-t.
$$
\end{corollary}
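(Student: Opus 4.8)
The plan is to derive Corollary \ref{time reversal symmetry} as a direct consequence of the explicit product formulas in Theorem \ref{xdist}, by checking that the two sides have the same probability mass function. This is purely a bookkeeping exercise with the product formulas, so the proof should be short; the only real care needed is in tracking the index ranges of the products and verifying the ranges of validity ($n \ge k$, $t \ge -k$, etc.) line up on both sides.

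\smallskip\noindent\textbf{First identity.} To prove $\Xn_t =^d X^{(n+t)}_{-t} - t$, I would fix $k \ge 0$ and compare $\p(\Xn_t = k)$ with $\p(X^{(n+t)}_{-t} - t = k) = \p(X^{(n+t)}_{-t} = k+t)$. Apply \eqref{product form} to the left side with parameters $(n,t,k)$, and apply it to the right side with $n$ replaced by $n+t$, $t$ replaced by $-t$, and $k$ replaced by $k+t$. The power of $x$ on the left is $x^{k(t+k)}$; on the right it is $x^{(k+t)(-t + (k+t))} = x^{(k+t)k}$, which agrees. For the product of factors $(1-x^i)$: on the left the numerator is $\prod_{i=n-k+1}^{n+t}(1-x^i)\prod_{i=k+1}^{n}(1-x^i)$ over $\prod_{i=1}^{t+k}(1-x^i)$; on the right, substituting, the numerator becomes $\prod_{i=(n+t)-(k+t)+1}^{(n+t)+(-t)}(1-x^i)\prod_{i=(k+t)+1}^{n+t}(1-x^i) = \prod_{i=n-k+1}^{n}(1-x^i)\prod_{i=k+t+1}^{n+t}(1-x^i)$ over $\prod_{i=1}^{-t+(k+t)}(1-x^i) = \prod_{i=1}^{k}(1-x^i)$. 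So I must check that
$$
\frac{\prod_{i=n-k+1}^{n+t}(1-x^i)\ \prod_{i=k+1}^{n}(1-x^i)}{\prod_{i=1}^{t+k}(1-x^i)}
= \frac{\prod_{i=n-k+1}^{n}(1-x^i)\ \prod_{i=k+t+1}^{n+t}(1-x^i)}{\prod_{i=1}^{k}(1-x^i)},
$$
which follows by splitting $\prod_{i=n-k+1}^{n+t} = \prod_{i=n-k+1}^{n}\cdot\prod_{i=n+1}^{n+t}$ on the left, splitting $\prod_{i=k+1}^{n} = \prod_{i=k+1}^{k+t}\cdot\prod_{i=k+t+1}^{n}$ on the left, and reindexing $\prod_{i=k+t+1}^{n+t} = \prod_{i=k+1}^{n}\cdot(\text{shift})$ on the right — i.e.\ both sides equal $x$-free products that telescope to the same thing after cancelling $\prod_{i=1}^{k}(1-x^i)$ from $\prod_{i=1}^{t+k}(1-x^i)$. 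When $t<0$ the ``extra'' factors appear in the denominator rather than the numerator, but the same algebra goes through; I would either treat signs uniformly via the convention $\prod_{i=a}^{b} = 1/\prod_{i=b+1}^{a-1}$ for $b<a-1$, or remark that it suffices to check it as an identity of rational functions. One also checks the validity ranges match: $\Xn_t$ needs $n \ge k$ and $t \ge -k$, while $X^{(n+t)}_{-t}$ at value $k+t$ needs $n+t \ge k+t$ and $-t \ge -(k+t)$, i.e.\ $n \ge k$ and $t \ge -k$ — identical.

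\smallskip\noindent\textbf{Second identity.} For $X_t =^d X_{-t} - t$, I would do the same with the limiting formula $\p(X_t = k) = x^{k(t+k)}\prod_{i=k+1}^{\infty}(1-x^i)/\prod_{i=1}^{t+k}(1-x^i)$. Comparing $\p(X_t=k)$ with $\p(X_{-t} = k+t)$: the powers of $x$ match as before, and the ratio of products is
$$
\frac{\prod_{i=k+1}^{\infty}(1-x^i)}{\prod_{i=1}^{t+k}(1-x^i)} \quad\text{vs.}\quad \frac{\prod_{i=k+t+1}^{\infty}(1-x^i)}{\prod_{i=1}^{k}(1-x^i)},
$$
and cross-multiplying reduces both to $\prod_{i=k+1}^{\infty}(1-x^i) \cdot \prod_{i=1}^{k}(1-x^i) = \prod_{i=k+t+1}^{\infty}(1-x^i)\cdot\prod_{i=1}^{t+k}(1-x^i)$, which is true since both sides equal $\prod_{i=1}^{\infty}(1-x^i) = g(x)$ after the finite-product factors are absorbed (using $\prod_{i=k+1}^{\infty} = \prod_{i=k+1}^{k+t}\cdot\prod_{i=k+t+1}^{\infty}$). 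Alternatively, this identity is immediate from the first one by letting $n \to \infty$, since $\Xn_t \to X_t$ and $X^{(n+t)}_{-t} \to X_{-t}$ in distribution.

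\smallskip\noindent\textbf{Main obstacle.} There is no serious obstacle here — the content is entirely in Theorem \ref{xdist}, and this corollary is a mechanical verification. The one place to be slightly careful is the index bookkeeping for the finite products when $t$ has either sign, so that the telescoping identity is stated in a form valid for all integers $t$ (not just $t \ge 0$); I would handle this by phrasing the product manipulations as identities of rational functions in $x$, where the sign of $t$ is irrelevant, and only afterward restrict to the probability ranges $k \ge 0$, $t \ge -k$ on which both formulas hold.
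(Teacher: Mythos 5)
Your proposal is correct and takes exactly the same route as the paper: the paper's proof consists of the single remark that ``it is routine to use Theorem \ref{xdist} to verify that $\p(X^{(n)}_t=k) = \p(X^{(n+t)}_{-t}=t+k)$ and $\P(X_t=k)=\P(X_{-t}=t+k)$,'' and your write-up simply carries out that routine verification, including the index bookkeeping the paper leaves implicit.
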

\begin{proof}
It is routine to use Theorem \ref{xdist} to verify that for all $k\ge 0$, $\p(X^{(n)}_t=k) = \p(X^{(n+t)}_{-t}=t+k)$ for all $n\ge0$ and $\P(X_t=k)=\P(X_{-t}=t+k)$. 
\end{proof}

 \section{Total Variation Distances, for $0 < x \le 1/2$}

\begin{theorem}\label{thm dtv sum} 
Suppose that $x\in (0,1)$. Then, in case $x \le 1/2$,
\begin{align*}
d_{TV}(S,S_n) 
	&=\prod_{i=1}^n(1-x^i)\(1-\prod_{i=n+1}^\infty (1-x^i)\)\\
	&\sim  \frac{g(x)}{1-x} \cdot x^{n+1},
\end{align*}
where $g(x)$ is defined in \eqref{g}.
%where $C=\prod_{i=1}^\infty1-x^i$. 
%while if $0<x<1/2$,  MORE

\end{theorem}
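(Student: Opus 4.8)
The plan is to compute $\dtv(S, S_n)$ directly from the discrete formula \eqref{dtv 2}, using the explicit distributions of $S$ and $S_n$ already in hand. From Theorem \ref{xdist} at $t = 0$ we have $\p(X_0 = k) = x^k \prod_{i \ge k+1}(1-x^i)$, and from Corollary \ref{Sn is logconcave} (equivalently Lemma \ref{rdist} with $m=1$) we have $\p(S_n = k) = x^k(1-x^n)\prod_{i=k+1}^{n+k-1}(1-x^i)$, while $\p(S = k) = \p(X_0 = k)$ since $S = h_0(\bX) = Z_1 + Z_2 + \cdots$. First I would form the pointwise difference $\p(S = k) - \p(S_n = k)$ and factor out the common $x^k$; the remaining factor is $\prod_{i \ge k+1}(1-x^i) - (1-x^n)\prod_{i=k+1}^{n+k-1}(1-x^i)$, which I would rewrite by pulling out $\prod_{i=k+1}^{n}(1-x^i)$ (valid for $k \le n$) to get $\prod_{i=k+1}^{n}(1-x^i)\bigl(\prod_{i \ge n+1}(1-x^i) - \tfrac{1-x^n}{1-x^k}\cdot(\text{correction})\bigr)$ — more cleanly, I would compare the two products term by term and identify exactly the sign of $\p(S=k) - \p(S_n=k)$ as a function of $k$.

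The key structural claim I expect to need is that the difference $\p(S = k) - \p(S_n = k)$ has a \emph{single sign change} in $k$: it is $\le 0$ for small $k$ and $\ge 0$ for large $k$ (or vice versa). Given that, \eqref{dtv 2} collapses to a single one-sided tail sum, which telescopes. Concretely, I anticipate that for $k$ below some threshold $\p(S_n = k) \ge \p(S = k)$, and the total variation distance equals $\sum_{k \ge k_0}(\p(S = k) - \p(S_n = k))$ for the crossover point $k_0$; because both $\sum_k \p(S=k)$ and $\sum_k \p(S_n = k)$ equal $1$, this also equals $\sum_{k < k_0}(\p(S_n = k) - \p(S = k))$. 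The hypothesis $x \le 1/2$ should be exactly what pins down where the sign change occurs (I expect at $k_0 = 0$ or $k_0 = 1$): intuitively, for $x \le 1/2$ the mass of $S$ is concentrated enough near $0$ that $S$ stochastically dominates $S_n$ in the relevant one-sided sense, so the crossover is at the bottom and $\dtv(S,S_n) = \p(S \ge 1) - \p(S_n \ge 1) = \p(S_n = 0) - \p(S = 0) = \prod_{i=1}^n(1-x^i) - \prod_{i=1}^\infty(1-x^i) = \prod_{i=1}^n(1-x^i)\bigl(1 - \prod_{i=n+1}^\infty(1-x^i)\bigr)$, which is the claimed formula.

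So the steps in order are: (1) write down $\p(S=k)$ and $\p(S_n=k)$ explicitly; (2) show $\p(S = 0) \le \p(S_n = 0)$ and that this is the only index where $\p(S_n = k) > \p(S = k)$ — equivalently, show $\p(S = k) \ge \p(S_n = k)$ for all $k \ge 1$ when $x \le 1/2$; (3) conclude via \eqref{dtv 2} that $\dtv(S,S_n) = \p(S_n = 0) - \p(S=0)$ and simplify to the product formula; (4) deduce the asymptotic by noting $\prod_{i=1}^n(1-x^i) \to g(x)$ and $1 - \prod_{i=n+1}^\infty(1-x^i) \sim \sum_{i \ge n+1} x^i = x^{n+1}/(1-x)$ via the Bonferroni bound \eqref{bounds dtv process}. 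The main obstacle is step (2): verifying the one-sided domination $\p(S=k) \ge \p(S_n=k)$ for all $k \ge 1$ under $x \le 1/2$. I would approach this by taking the ratio $\p(S=k)/\p(S_n=k) = \prod_{i \ge n+1}(1-x^i) \big/ \prod_{i=k+1}^{n}(1-x^i)\cdot\frac{1}{1-x^n}$ — wait, more carefully, $\p(S=k)/\p(S_n=k) = \frac{\prod_{i \ge k+1}(1-x^i)}{(1-x^n)\prod_{i=k+1}^{n+k-1}(1-x^i)} = \frac{\prod_{i \ge n+k}(1-x^i)}{1-x^n}$ for $1 \le k$, and I must show this is $\ge 1$, i.e. $\prod_{i \ge n+k}(1-x^i) \ge 1 - x^n$; since the left side is $\ge \prod_{i \ge n+1}(1-x^i)$ is the wrong direction, I instead bound $\prod_{i \ge n+k}(1-x^i) \ge 1 - \sum_{i \ge n+k} x^i = 1 - x^{n+k}/(1-x) \ge 1 - x^n$ precisely when $x^{n+k}/(1-x) \le x^n$, i.e. $x^k \le 1-x$, which for $k \ge 1$ holds iff $x \le 1-x$, i.e. $x \le 1/2$ — exactly the hypothesis. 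That computation is routine once set up, so the real work is just organizing the sign analysis cleanly.
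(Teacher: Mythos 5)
Your proof is correct and follows essentially the same route as the paper: compute the explicit point probabilities from Lemma~\ref{rdist}, observe a single sign change at $k=0$ via the ratio $\p(S=k)/\p(S_n=k) = \prod_{i\ge n+k}(1-x^i)/(1-x^n)$, conclude $\dtv(S,S_n) = \p(S_n=0)-\p(S=0)$, and get the asymptotic from the Bonferroni bounds. The chain of inequalities you use for $k\ge1$ is a mild rearrangement of the paper's, and the hypothesis $x\le 1/2$ enters at exactly the same point.

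One small slip: you write that $\p(S=k) = \p(X_0=k)$ ``since $S = h_0(\bX)$,'' but $h_0(\bX)$ is the hitting time of level $0$, not the height at time $0$, so $S$ and $X_0$ are different random variables (Theorem~\ref{xdist} at $t=0$ actually gives $\p(X_0=k)=x^{k^2}\prod_{i\ge k+1}(1-x^i)/\prod_{i=1}^k(1-x^i)$, which is not the formula you wrote). The formula you actually use for $\p(S=k)$, namely $x^k\prod_{i\ge k+1}(1-x^i)$, is nonetheless correct; it comes from the $n\to\infty$ limit of Lemma~\ref{rdist} with $m=1$, the parenthetical source you also cite. So this is a misattribution rather than a mathematical gap. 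Also, your aside that bounding via $\prod_{i\ge n+1}(1-x^i)$ ``is the wrong direction'' is itself backwards --- since removing factors less than $1$ increases a product, $\prod_{i\ge n+k}\ge\prod_{i\ge n+1}$, and the paper in fact uses exactly this simpler route --- but the bound you chose works just as well.
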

 \begin{proof}
 From Lemma \ref{rdist}, we have that 
 \begin{align*}
 \P(S_n=k) &= x^k(1-x^n)\prod_{i=k+1}^{n+k-1}(1-x^i),\\
 \P(S=k) &= x^k\cdot \prod_{i=k+1}^\infty (1-x^i).
 \end{align*}
 The key observation is that $P(S_n=k)>P(S=k)$ when $k=0$, but the reverse inequality holds otherwise. To see this, consider the ratio
 $$
 \frac{\P(S_n=k)}{\P(S=k)} = \frac{1-x^n}{\prod_{i=n+k}^\infty (1-x^i)}.
 $$

When $k=0$, this ratio is $\frac{1-x^n}{\prod_{i=n}^\infty (1-x^i)}=\frac{1}{\prod_{i=n+1}^\infty (1-x^i)}>1$, while when $k\ge 1$, the ratio is less than one, as shown below:
$$
1-x^n \le 1-\frac{x^{n+1}}{1-x}=1-\sum_{i=n+1}^\infty x^i\le \prod_{i=n+1}^\infty(1-x^i)\le \prod_{i=n+k}^\infty(1-x^i).
$$
The first inequality above uses the fact that $\frac{x}{1-x}\le 1$, which follows from $x\le 1/2$. 

We have proven that $\P(S_n=k)>\P(S=k)$ when $k=0$, but the reverse inequality holds otherwise, which implies that total variation distance is simply given by
$$
\dtv(S,S_n)=\P(S_n=0)-\P(S=0)=\prod_{i=1}^n(1-x^i)\(1-\prod_{i=n+1}^\infty (1-x^i)\).
$$
The above exact expression for $ \dtv(S,S_n)$ implies the asymptotic result $\dtv(S,S_n)\sim x^{n+1}\cdot  g(x)/(1-x)$ by using the bounds
 $$
\( \sum_{i>n}x^i \)- x^{2n+3}/(1-x)^2\le 1-\prod_{i=n+1}^\infty (1-x^i)\le \sum_{i>n}x^i,
 $$
 which follow from \eqref{bonferroni}. 

 \end{proof}

Finally, we have exact and asymptotic results for the total variation distance between each coordinate $X^{(n)}_t$ and $X_t$ of the two processes, provided that $x\le \frac12$.

\begin{theorem}\label{thm dtv corank} 
Suppose that $x\in (0,1/2]$. For all $n,t\ge0$,
\begin{align*}
\dtv(X_t,  X^{(n)}_t) 
	&= \left(\prod_{i=t+1}^{n+t} 1-x^i\right)\left(1-\prod_{i=n+t+1}^\infty 1-x^i \right)\\
	&\sim \frac{C_t}{1-x}\cdot x^{n+t+1},
\end{align*}
where $C_t=\prod_{i=t+1}^\infty1-x^i$. When $t<0$, 
\begin{align*}
\dtv(X_t,  X^{(n)}_t)    % will put  a minus sign in front of t,  or maybe an absolute value
	&= \left(\prod_{i=|t|+1}^{n} 1-x^i\right)\left(1-\prod_{i=n+1}^\infty 1-x^i \right)\\
	&\sim \frac{C_{|t|}}{1-x}\cdot x^{n+1}.
\end{align*}
\end{theorem}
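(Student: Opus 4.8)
The strategy mirrors the proof of Theorem~\ref{thm dtv sum} exactly, using the explicit formulas from Theorem~\ref{xdist} in place of those from Lemma~\ref{rdist}. First I would reduce the $t<0$ case to the $t\ge 0$ case: by Corollary~\ref{time reversal symmetry}, $X_t =^d X_{-t}-t$ and $\Xn_t =^d X^{(n+t)}_{-t}-t$, so shifting both variables by the constant $-t$ (which does not affect total variation distance) gives $\dtv(X_t,\Xn_t)=\dtv(X_{-t}, X^{(n+t)}_{-t})$; applying the $t\ge 0$ result with $-t=|t|$ in the role of $t$ and $n+t$ in the role of $n$, and noting $(n+t)+(-t) = n$, yields exactly the stated formula and asymptotics for $t<0$. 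So it suffices to handle $t\ge 0$.

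For $t \ge 0$, Theorem~\ref{xdist} gives, for $0\le k\le n$,
\begin{align*}
\p(\Xn_t=k) &= x^{k(t+k)}\cdot\frac{\prod_{i=n-k+1}^{n+t}(1-x^i)\prod_{i=k+1}^{n}(1-x^i)}{\prod_{i=1}^{t+k}(1-x^i)},\\
\p(X_t=k) &= x^{k(t+k)}\cdot\frac{\prod_{i=k+1}^\infty(1-x^i)}{\prod_{i=1}^{t+k}(1-x^i)}.
\end{align*}
The key step is to compute the ratio $\p(\Xn_t=k)/\p(X_t=k)$ and show it exceeds $1$ exactly when $k=0$ and is $\le 1$ for $k\ge 1$. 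Cancelling the common factors $x^{k(t+k)}/\prod_{i=1}^{t+k}(1-x^i)$ and the overlapping portion $\prod_{i=k+1}^{n}(1-x^i)$ of the numerators, the ratio simplifies to
$$
\frac{\p(\Xn_t=k)}{\p(X_t=k)} = \frac{\prod_{i=n-k+1}^{n+t}(1-x^i)}{\prod_{i=n+1}^\infty(1-x^i)}.
$$
For $k=0$ the numerator is $\prod_{i=n+1}^{n+t}(1-x^i)$, so the ratio is $1/\prod_{i=n+t+1}^\infty(1-x^i)>1$. For $k\ge 1$ I would bound the numerator above by $1$ after splitting off the ``low'' factors $\prod_{i=n-k+1}^{n}(1-x^i)$: the remaining numerator $\prod_{i=n+1}^{n+t}(1-x^i)$ is at most $\prod_{i=n+1}^\infty(1-x^i)\big/\prod_{i=n+t+1}^\infty(1-x^i)$, and the hypothesis $x\le 1/2$ is used exactly as in Theorem~\ref{thm dtv sum} — via $x/(1-x)\le 1$ — to show $\prod_{i=n-k+1}^{n}(1-x^i)\le\prod_{i=n+t+1}^\infty(1-x^i)$, so the whole ratio is $\le 1$. (One must check the edge case $k=n$, where the product $\prod_{i=k+1}^n$ is empty, causes no trouble; the same inequality chain goes through.)

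Once the sign pattern of $\p(\Xn_t=k)-\p(X_t=k)$ is established, formula~\eqref{dtv 2} gives
$$
\dtv(X_t,\Xn_t)=\p(\Xn_t=0)-\p(X_t=0)=\prod_{i=t+1}^{n+t}(1-x^i)\left(1-\prod_{i=n+t+1}^\infty(1-x^i)\right),
$$
using $\p(\Xn_t=0)=\prod_{i=t+1}^{n+t}(1-x^i)$ from \eqref{kiszero} and $\p(X_t=0)=\prod_{i=t+1}^\infty(1-x^i)$. The asymptotic statement then follows, just as at the end of Theorem~\ref{thm dtv sum}, from the Bonferroni bounds \eqref{bonferroni} applied to $1-\prod_{i=n+t+1}^\infty(1-x^i)$, which is $\sim x^{n+t+1}/(1-x)$, together with $\prod_{i=t+1}^{n+t}(1-x^i)\to C_t=\prod_{i=t+1}^\infty(1-x^i)$. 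I expect the only real obstacle to be the careful verification of the ratio inequality for $k\ge 1$ — in particular getting the telescoping of infinite products right and confirming that $x\le 1/2$ is exactly what makes the $k\ge 1$ factors shrink fast enough; everything else is bookkeeping parallel to the already-proved $S$ versus $S_n$ case.
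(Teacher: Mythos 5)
Your proposal is correct and follows essentially the same route as the paper: use Theorem~\ref{xdist} to compute the ratio $\p(\Xn_t=k)/\p(X_t=k)$, show (via $x/(1-x)\le 1$, i.e.\ $x\le 1/2$) that it exceeds $1$ only at $k=0$, conclude $\dtv=\p(\Xn_t=0)-\p(X_t=0)$, and reduce $t<0$ to $t\ge 0$ through Corollary~\ref{time reversal symmetry}. Your index bookkeeping in the $t<0$ reduction (using $X^{(n+t)}_{-t}$, so that $(n+t)+(-t)=n$) is in fact a cleaner statement of the shift than what appears in the paper's own write-up.
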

 \begin{proof}
 The proof is similar to that of Theorem \ref{thm dtv sum}.
 
First, suppose $t\ge 0$. For any $k\in \BZ_+$, consider the ratio between $\P(\Xn_t=k)$ and $\P(X_t=k)$, which we attain using Theorem \ref{xdist}:
\begin{equation}\label{ratio}
\frac{\P(\Xn_t=k)}{\P(X_t=k)}=\frac{\prod_{i=n-k+1}^n(1-x^i)}{\prod_{i=n+t+1}^\infty(1-x^i)}.
\end{equation}¥

When $k=0$, the numerator is an empty product, which means the above ratio is greater than one. However, for all $k\ge 1$, the ratio is less than one, as shown be the following computation:
$$
\prod_{i=n-k+1}^n(1-x^i)
	\le 1-x^n\le 1-\frac{x^{n+1}}{1-x}
	%=1-\sum_{i=n+1}^\infty x^i 
	\le 1-\sum_{i=n+t+1}^\infty x^i
	\le \prod_{i=n+t+1}^\infty (1-x^i).
$$
 As in the proof of Theorem \ref{thm dtv sum}, the total variation distance is simply given by
 \begin{align*}
 \dtv(X_t,\Xn_t)
 	&=\P(\Xn_t=0)-\P(X_t=0)\\
	&=\prod_{t+1}^{n+t}(1-x^i) - \prod_{t+1}^\infty (1-x^i)\\
	&= \left(\prod_{t+1}^{n+t} 1-x^i\right)\left(1-\prod_{n+t+1}^\infty 1-x^i \right),
 \end{align*}
 as claimed. The asymptotic result also follows similarly.
   
 In case $t<0$, the result follows from the $t\ge 0$ case by using Corollary \ref{time reversal symmetry}, which implies that
 $$
 (X_{t},\Xn_{t})=^d (X_{-t}+t,X^{(n-t)}_{-t}+t).
 $$
 so 
 $$
 \dtv(X_t,\Xn_{-t})=\dtv(X_{-t}+t,X^{(n-t)}_{-t}+t)=\dtv(X_{|t|},X^{(n-t)}_{|t|}).
 $$
 
 \end{proof}

 We remark that Theorem \ref{thm dtv corank} implies the more elementary bounds,  for $ t \ge 0$
 $$
 \frac{1}{2(1-x)}\cdot x^{n+t+1}\le \dtv(X_t,\Xn_t)\le \frac1{1-x}\cdot x^{n+t+1}.
 $$
%These upper and lower bounds differ by a factor of two, which is a significant improvement over 24. 
In the case $x=1/q$,  this narrows the ratio of upper bound to lower bound in \eqref{quote FG}
from 24 to 2;  see Theorem \ref{thm easy 1} for the notational details of how, with $t=m  \ge 0$,  our
$\Xn_t$ corresponds to $\cQ_{q,n}$ and $X_t$ corresponds to $\cQ_q$.

\section{Total Variation Distances,  allowing $x > 1/2$}

For the application to linear algebra,  one always has $x = 1/q \le 1/2$,  so the results of the previous section are adequate.
For the countdown process in general, it is possible to analyze the asymptotic total variation distance,   for  the hitting time to zero, i.e., $S$ versus $S_n$;   Theorem \ref{thm dtv sum 2} below contains Theorem \ref{thm dtv sum} as a special case.  For the analysis of the asymptotic total variation distance for the height at time $t$, i.e.,  $X_t$ versus $\Xn_t$,  there are further obstacles, and we don't have a generalization for Theorem \ref{thm dtv corank}.  Nevertheless, we believe that the point of view given in the paragraph following Observation \ref{deathandtailsums} could be the starting point for such analysis.

\begin{question}
Give asymptotics for $\dtv(X_t,\Xn_t)$,   as $n \to \infty$, for fixed $t \in \BZ$, and allowing $0 < x < 1$.
\end{question}

For our analysis of the asymptotic value of $\dtv(S,S_n)$, we begin with a few general principles, in the form of Lemma \ref{dtv unimodal shift},  Lemma \ref{mixture},  and
Corollary \ref{dtv var plus bernoulli}.   After that, we give a bit of concrete calculation in Proposition \ref{Rn tail},  and Theorem \ref{thm dtv sum 2} follows easily.

An integer-valued random variable $X$ is said to be \emph{unimodal}  if there exists a value $k_0$,
such that  $\p(X=i) \le \p(X=i+1)$ for  $i<k_0$ and $\p(X=i) \ge \p(X=i+1)$  for $i \ge k_0$.   In this case, we say that the distribution of $X$ is unimodal, with mode at $k_0$.  It is a standard fact, easily proved, that if a distribution is \emph{log concave},  then it must be \emph{unimodal}.

\begin{lemma}\label{dtv unimodal shift}
Let $X$ be an integer valued random variable whose distribution is unimodal. Then
$$\dtv(X,X+1)=\sup_{k\in \BZ}\P(X=k).
$$
\end{lemma}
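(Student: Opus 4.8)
The plan is to use the characterization of total variation distance on $\BZ$ given by \eqref{dtv 2}, namely $\dtv(X,X+1)=\sum_k \max(0,\p(X=k)-\p(X+1=k))=\sum_k \max(0,p_k - p_{k-1})$, where $p_k := \p(X=k)$. The entire content of the lemma is that when $(p_k)$ is unimodal with mode $k_0$, this sum telescopes down to a single term. First I would observe that $p_k - p_{k-1} \ge 0$ precisely when $k \le k_0$ (by the definition of unimodality: the sequence is nondecreasing up to the mode and nonincreasing after it), so the only surviving summands are those with $k \le k_0$, giving $\sum_{k \le k_0}(p_k - p_{k-1})$.

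Next I would evaluate this telescoping sum. Since $p_k \to 0$ as $k \to -\infty$ (the probabilities are summable), $\sum_{k\le k_0}(p_k-p_{k-1}) = \lim_{j\to -\infty}\sum_{k=j}^{k_0}(p_k - p_{k-1}) = p_{k_0} - \lim_{j\to-\infty}p_{j-1} = p_{k_0}$. Finally, $p_{k_0} = \sup_{k\in\BZ}\p(X=k)$ because $k_0$ is the mode of a unimodal distribution, so $p_{k_0}\ge p_k$ for every $k$. Combining, $\dtv(X,X+1) = p_{k_0} = \sup_k \p(X=k)$, as claimed.

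There is essentially no hard step here — the argument is a one-line telescoping identity dressed up with the unimodality bookkeeping. The only point requiring a modicum of care is the edge behavior at $-\infty$: one must know that $p_k\to 0$ there (immediate from $\sum_k p_k = 1$ with nonnegative terms) so that the telescoping sum genuinely collapses to $p_{k_0}$ rather than $p_{k_0}$ minus some residual tail. I would also note in passing that unimodality is exactly what is needed, and that the log-concave distributions arising in this paper (e.g.\ $S_n$, via Corollary~\ref{Sn is logconcave}) are unimodal by the standard fact recalled just above the lemma, which is presumably why this lemma is being set up.
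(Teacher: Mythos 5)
Your proof is correct and follows essentially the same route as the paper's: apply the pointwise formula \eqref{dtv 2}, use unimodality to identify the surviving terms as those with $k\le k_0$, and telescope to $\p(X=k_0)=\sup_k\p(X=k)$. The only difference is that you explicitly flag the $p_k\to 0$ as $k\to-\infty$ edge condition needed for the telescoping to collapse cleanly, which the paper leaves implicit.
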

\begin{proof}
Starting from \eqref{dtv 2},  we have 
$$ 
\dtv(X,X+1) % \sum_i   \max(0, \p(Y = i) - \p(Y+1 = i))
 = \sum_i   \max(0, \p(X = i) - \p(X = i-1)) .
$$
When the distribution of $Y$ is unimodal with mode at $k_0$, the above simplifies to
$$ 
\dtv(X,X+1)  =  \sum_{i \le k_0}  \p(X = i) - \p(X = i-1)
$$
and the sum telescopes to give  $\dtv(X,X+1)  =  \p(X= k_0)$. 
Obviously, unimodality implies that 
 $\sup_{k\in \BZ}  \p(X=k) = \P(X=k_0)$.  
\end{proof}

\begin{example}
Suppose  $X$ is Poisson with mean $\lambda \in (0,\infty)$.   The distribution of $X$ is unimodal, with mode at $k_0 = \lfloor \lambda \rfloor$,   i.e.,  $\lambda$ rounded down to an integer. We have
$\dtv(X, X+1) = \p(X=k_0)$.  By Stirling's formula, as $\lambda \to \infty$, $\dtv(X,X+1) \sim 1/\sqrt{2 \, \pi \,  \lambda}$.
\end{example}

\begin{lemma}\label{mixture}
Suppose that the distribution of $Y$ is a mixture of the distributions of $X$ and $Z$,  with
$\p(Y \in B) = (1-p) \p(X \in B) + p \, \p(Z \in B)$ for all measurable $B$.  Then 
$$
\dtv(X,Y) = p \ \dtv(X,Z).
$$
\end{lemma}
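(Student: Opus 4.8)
The plan is to work directly from the definition of total variation distance given in \eqref{dtv 1}, exploiting the linearity of the mixture relation in the quantity $\p(\cdot \in B)$. For any measurable set $B$, the hypothesis gives $\p(Y \in B) - \p(X \in B) = \big((1-p)\p(X\in B) + p\,\p(Z\in B)\big) - \p(X \in B) = p\big(\p(Z\in B) - \p(X \in B)\big)$. Taking absolute values and then the supremum over all measurable $B$ yields
\[
\dtv(X,Y) = \sup_{B} |\p(Y\in B) - \p(X \in B)| = \sup_B p\,|\p(Z\in B) - \p(X\in B)| = p\,\dtv(X,Z),
\]
where in the last step we pull the nonnegative constant $p$ outside the supremum, and recognize the remaining supremum as $\dtv(X,Z)$ by \eqref{dtv 1} again.

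There is essentially no obstacle here: the only point requiring a word of care is that $p \ge 0$, so that $|p\cdot c| = p|c|$ and $p$ may be extracted from the supremum; this is implicit in calling the distribution of $Y$ a ``mixture.'' If one wished to avoid even mentioning measurability, one could instead invoke \eqref{dtv 2} in the discrete case, writing $\max(0,\p(Y=k)-\p(X=k)) = \max(0, p(\p(Z=k)-\p(X=k))) = p\max(0,\p(Z=k)-\p(X=k))$ and summing over $k$; but the supremum argument is cleaner and works in full generality, so I would present that.

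I would write the proof in two or three sentences, essentially just the displayed chain of equalities above together with the remark that $p \ge 0$ is what licenses factoring it out of the supremum. No lemma stated earlier in the excerpt is needed; the argument is self-contained from the definitions \eqref{dtv 1}–\eqref{dtv 2}.
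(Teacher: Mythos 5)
Your proof is correct and is precisely the computation the paper leaves implicit: the paper's own proof is just ``Obvious from \eqref{dtv 1},'' and you have spelled out exactly why, by factoring the constant $p \ge 0$ out of the supremum over $B$. Same approach, just made explicit.
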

\begin{proof}
Obvious from \eqref{dtv 1}.
\end{proof}
   
\begin{corollary}\label{dtv var plus bernoulli}
Let $X$ be an integer valued random variable,  whose distribution is unimodal, and let $U$ be independent of $X$, with the Bernoulli distribution having parameter $p$,  i.e.,  $\p(U=1)=p=1-\p(U=0)$.  Then  
$$\dtv(X,X+U)= p \ \sup_{k\in \BZ}\P(X=k).$$
\end{corollary}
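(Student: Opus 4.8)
The plan is to combine Lemma~\ref{mixture} with Lemma~\ref{dtv unimodal shift}. First I would observe that, since $U$ is independent of $X$, the distribution of $X+U$ is a mixture: for every measurable $B$,
\[
\P(X+U \in B) = (1-p)\,\P(X \in B) + p\,\P(X+1 \in B),
\]
obtained by conditioning on the value of $U$. Thus $X+U$ has a distribution that is a mixture of the distribution of $X$ (weight $1-p$) and the distribution of $X+1$ (weight $p$), exactly in the sense of Lemma~\ref{mixture} with $Z := X+1$.

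Next I would apply Lemma~\ref{mixture} directly to conclude
\[
\dtv(X, X+U) = p \cdot \dtv(X, X+1).
\]
Then, since the distribution of $X$ is assumed unimodal, Lemma~\ref{dtv unimodal shift} gives $\dtv(X, X+1) = \sup_{k \in \BZ} \P(X=k)$. Substituting this into the previous display yields $\dtv(X, X+U) = p\,\sup_{k\in\BZ}\P(X=k)$, which is the claim.

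Honestly, there is no real obstacle here: the statement is an immediate corollary of the two preceding lemmas, and the only thing to verify carefully is the mixture identity, which is just the law of total probability applied to the independent pair $(X,U)$. One should note that the degenerate cases $p=0$ and $p=1$ are covered automatically (at $p=0$ both sides vanish; at $p=1$ the identity reduces to Lemma~\ref{dtv unimodal shift}), so no separate argument is needed. The only place one might pause is to confirm that independence of $X$ and $U$ is exactly what makes the conditioning decomposition valid — which it is.
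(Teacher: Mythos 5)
Your proof is correct and follows exactly the same route as the paper: write $X+U$ as a mixture of $X$ (weight $1-p$) and $X+1$ (weight $p$), apply Lemma~\ref{mixture} to reduce to $\dtv(X,X+1)$, then invoke Lemma~\ref{dtv unimodal shift}. No gaps, nothing to add.
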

\begin{proof}
Lemma \ref{mixture} applies here, with $Y=X+U$ having distribution a mixture of the distributions of $X$ and $Z=X+1$.
\end{proof}

\begin{example}
Suppose  $X$ is Binomial($n,p$) with mean $p \in (0,1)$.   The distribution of $X$ is unimodal, with mode at $k_0$ equal to $ \lfloor np  \rfloor$ or $\lceil np \rceil$. 
Suppose $Y$ is  Binomial$(n+1,p)$.   Then  $\dtv(X,Y  ) = p \ \p(X=k_0)$.  By Stirling's formula, as $n \to \infty$, $\dtv(X,Y ) \sim p /\sqrt{2 \, \pi \,  n \, p (1-p) }$.
\end{example}

\begin{example} \label{critical x}
 Lemma \ref{Sn is logconcave} states that the distribution of $S_n$ is logconcave,  hence it is unimodal, so Lemma \ref{dtv unimodal shift} and Corollary  \ref{dtv var plus bernoulli}
apply.   We note that in \eqref{S ratio},   the limit as $n \to \infty$ of the ratio is
$$
\frac{\p(S=k+1)}{\p(S=k)} = \frac{x}{1-x^{k+1}}.
$$
For $k=0,1,2,\dots$,  the critical value   $x_k$,  where a tie occurs between $\p(S=k)$ and $\p(S=k+1)$,  is the solution $x_k$ of $x^{k+1}=1-x$.  In particular  $x_0=1/2$,  and $x_1 \doteq .61803$ is one less than the golden mean.
For $x \in (x_k,x_{k+1})$, for all sufficiently large $n$,  the mode of $S_n$ occurs at $k$.

For very large $k$,  $x_k$ is close to 1.  A convenient way to analyze the asymptotic relation is to \emph{define}  $y_k$  by the relation $x_k = \exp(-1/y_k)$;   this leads to
$1/y_k \doteq 1-x_k = (x_k)^{k+1} \doteq (x_k)^k = \exp(-k/y_k)$ so that $y_k \doteq \exp(k/y_k)$
and $k/y_k \doteq \log(y_k)$.  Along these lines it can be proved that as $k \to \infty$,  $k \sim y_k \log y_k$, and an even more careful analysis reveals that
$$
k = y_k\log y_k -\tfrac12 - \tfrac1{24}y^{-1}+O(k^{-2}).
$$
For example,  to have  $x_k \doteq .999$ we consider $y=1000$  with $y \log y %= 1000 \log 1000 = 
%3000 \ln 10 = 3000 \times 2.302585\dots 
 \doteq 6907.755$;   the exact values nearby are
$x_{6907} =0.9990004676\dots$    and $x_{6908}=0.9990005939\dots$.
 
\end{example}
 
\begin{proposition}\label{Rn tail}
For all $x\in (0,1)$, and $n\ge1$,  $\p(R_n > 1) \le x^{2n}/(1-x)^2$.
\end{proposition}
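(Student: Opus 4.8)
The plan is to decompose the event $\{R_n>1\}$ according to the mechanism that produces an overshoot of at least $2$, and then apply a union bound. Since $R_n=Z_{n+1}+Z_{n+2}+\cdots$ is $\BZ_+$-valued (and a.s.\ finite, as $\e R_n\le\sum_{i>n}x^i/(1-x^i)<\infty$), the event $\{R_n>1\}$ is the event $\{R_n\ge 2\}$. If $R_n\ge 2$, then either some single summand satisfies $Z_i\ge 2$ for an index $i>n$, or else every nonzero summand equals $1$ and there are at least two of them; in the latter case the two smallest indices $n<i<j$ with a nonzero entry satisfy $Z_i\ge 1$ and $Z_j\ge 1$. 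Hence
$$
\{R_n>1\}\ \subseteq\ \Big(\bigcup_{i>n}\{Z_i\ge 2\}\Big)\ \cup\ \Big(\bigcup_{n<i<j}\{Z_i\ge 1,\ Z_j\ge 1\}\Big),
$$
and this case analysis is manifestly exhaustive.

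Next I would apply the union bound, using the independence of $Z_1,Z_2,\dots$ together with the tail formula $\p(Z_i\ge k)=x^{ik}$ from \eqref{Z}. This gives
$$
\p(R_n>1)\ \le\ \sum_{i>n}\p(Z_i\ge 2)\ +\ \sum_{n<i<j}\p(Z_i\ge 1)\,\p(Z_j\ge 1)\ =\ \sum_{i>n}x^{2i}\ +\ \sum_{n<i<j}x^{i}x^{j}.
$$
Both sums are elementary geometric series: the first equals $x^{2n+2}/(1-x^2)$, and for the second, summing over $j$ first yields $\sum_{i>n}x^i\cdot\frac{x^{i+1}}{1-x}=\frac{x}{1-x}\cdot\frac{x^{2n+2}}{1-x^2}$. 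Adding these and simplifying,
$$
\p(R_n>1)\ \le\ \frac{x^{2n+2}}{(1-x^2)(1-x)}\ =\ \frac{x^{2n+2}}{(1+x)(1-x)^2}\ \le\ \frac{x^{2n}}{(1-x)^2},
$$
where the final inequality uses $x\in(0,1)$, so that $x^{2n+2}\le x^{2n}$ and $1+x\ge 1$.

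There is no genuine obstacle here; the only points requiring care are verifying that the combinatorial decomposition of $\{R_n\ge 2\}$ is exhaustive and handling the bookkeeping of the double geometric sum. An alternative route would be to condition on the smallest index $i>n$ with $Z_i\neq 0$ and recurse, but the union bound above is cleaner and in fact produces a bound slightly stronger than the one stated.
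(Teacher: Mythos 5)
Your proof is correct, and it takes a genuinely different route from the paper's. The paper's proof invokes the exact formula for $\P(R_n=k)$ from Lemma \ref{rdist}, writes $\P(R_n>1)=1-\P(R_n=0)-\P(R_n=1)$ explicitly as $1-\prod_{i>n}(1-x^i)-\frac{x^{n+1}}{1-x}\prod_{i>n}(1-x^i)$, and then applies the Bonferroni-type bound $1-\prod_{i>n}(1-x^i)\le x^{n+1}/(1-x)$ twice to arrive at $(x^{n+1}/(1-x))^2$. Your argument avoids the distributional formula altogether: you decompose $\{R_n\ge 2\}$ combinatorially into ``some coordinate is $\ge 2$'' or ``two distinct coordinates are $\ge 1$,'' then use the union bound, independence, and the geometric tail $\p(Z_i\ge k)=x^{ik}$ directly. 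This is more elementary (it needs only \eqref{Z}, not Lemma \ref{rdist}) and in fact yields the slightly sharper constant $x^{2n+2}/\bigl((1+x)(1-x)^2\bigr)$, compared with the paper's $x^{2n+2}/(1-x)^2$; both are stronger than the stated $x^{2n}/(1-x)^2$. What the paper's route buys is that it reuses machinery already in place (the exact law of $R_n$) with essentially no new work, which is convenient given that Lemma \ref{rdist} is needed elsewhere; what your route buys is self-containedness and a marginally better bound. Your case analysis of $\{R_n\ge 2\}$ is exhaustive and your geometric-series bookkeeping is correct.
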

\begin{proof}
From Lemma \ref{rdist}, we know the distribution of $R_n=S-S_n$ is given by
$$
\P(R_n=k)=x^{(n+1)k}\frac{\prod_{i=n+1}^{\infty}(1-x^i)}{\prod_{i=1}^k(1-x^i)}.
$$
Therefore,
\begin{align*}
\P(U_n\neq R_n)
	&=1-\P(R_n=0)-\P(R_n=1)\\
	&=1-\prod_{i=n+1}^\infty (1-x^i)-\frac{x^{n+1}}{1-x}\prod_{i=n+1}^\infty (1-x^i)\\
	&\le \frac{x^{n+1}}{1-x}-\frac{x^{n+1}}{1-x}\prod_{i=n+1}^\infty (1-x^i)\\
	&\le \(\frac{x^{n+1}}{1-x}\)^2.
\end{align*}¥
\end{proof}

\begin{theorem}\label{thm dtv sum 2} For all $x\in (0,1)$, as $n \to \infty$,
$$\dtv(S,S_n)\sim C_x\cdot \frac{x^{n+1}}{1-x},$$
where $C_x=\max_{k\in \BZ_+} \P(S=k)$.
\end{theorem}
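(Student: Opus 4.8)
The plan is to deduce Theorem~\ref{thm dtv sum 2} from the decomposition $S = S_n + R_n$, where $R_n$ is independent of $S_n$, together with the approximation of $R_n$ by a Bernoulli variable and the unimodality of $S_n$. First I would introduce $U_n$, the Bernoulli random variable with $\p(U_n=1) = \p(R_n \ge 1)$, coupled to $R_n$ so that $U_n = \min(R_n,1)$; then $\{U_n \ne R_n\} = \{R_n > 1\}$, and Proposition~\ref{Rn tail} gives $\p(U_n \ne R_n) \le x^{2n}/(1-x)^2$, which is $o(x^{n+1}/(1-x))$. Working conditionally on $S_n$ (using independence of $R_n$ and $S_n$), the triangle inequality for total variation distance gives
\begin{align*}
\bigl| \dtv(S_n, S_n + R_n) - \dtv(S_n, S_n + U_n) \bigr|
 &\le \dtv(S_n + R_n, S_n + U_n) \\
 &\le \p(R_n \ne U_n) \le \Bigl(\frac{x^{n+1}}{1-x}\Bigr)^2 .
\end{align*}

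Next I would evaluate $\dtv(S_n, S_n + U_n)$ exactly. Since $U_n$ is Bernoulli with parameter $p_n := \p(R_n \ge 1) = 1 - \prod_{i=n+1}^\infty(1-x^i)$ and is independent of $S_n$, and since $S_n$ is log concave (Corollary~\ref{Sn is logconcave}) hence unimodal, Corollary~\ref{dtv var plus bernoulli} applies and yields
$$
\dtv(S_n, S_n + U_n) = p_n \cdot \sup_{k} \p(S_n = k).
$$
It then remains to take $n \to \infty$ in this product. For the first factor, the Bonferroni bounds~\eqref{bonferroni} (as already used in Theorem~\ref{thm dtv process}) give $p_n \sim x^{n+1}/(1-x)$. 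For the second factor, I would argue that $\sup_k \p(S_n = k) \to \sup_k \p(S = k) = C_x$: the pointwise convergence $\p(S_n = k) \to \p(S = k)$ is immediate from Lemma~\ref{rdist}, and to upgrade pointwise convergence of the maxima to convergence one uses that both $S_n$ and $S$ have log concave (hence unimodal) distributions with modes in a bounded range — indeed from~\eqref{S ratio} the mode of $S_n$ stabilizes, for each fixed $x$, at the value $k$ determined by $x \in (x_k, x_{k+1})$ as in Example~\ref{critical x} — so the supremum is attained at a fixed index for all large $n$, reducing the claim to the pointwise statement at that index.

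Combining: $\dtv(S,S_n) = \dtv(S_n, S_n + R_n) = p_n \sup_k \p(S_n=k) + O(x^{2n+2}/(1-x)^2)$, and since $p_n \sup_k \p(S_n=k) \sim (x^{n+1}/(1-x)) \cdot C_x$ while the error term is of strictly smaller order, we get $\dtv(S,S_n) \sim C_x \, x^{n+1}/(1-x)$, as claimed.

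I expect the main obstacle to be the step showing $\sup_k \p(S_n=k) \to C_x$, i.e., justifying that the mode location stabilizes. This is where the boundary cases (values of $x$ equal to some critical $x_k$, where two consecutive probabilities tie in the limit) need a small amount of care: at such $x$ the mode of $S_n$ may oscillate between two adjacent indices, but since $\p(S_n=k)$ and $\p(S_n=k+1)$ both converge to the common value $\p(S=k) = \p(S=k+1) = C_x$, the supremum still converges to $C_x$. Everything else — the Bernoulli coupling, the conditional triangle inequality, the Bonferroni asymptotics — is routine given the lemmas already established, and in particular this argument works for all $x \in (0,1)$, not just $x \le 1/2$, which is the whole point of stating Theorem~\ref{thm dtv sum 2} separately from Theorem~\ref{thm dtv sum}.
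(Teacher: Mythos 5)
Your proposal is correct and follows essentially the same route as the paper's proof: decompose $S = S_n + R_n$, truncate $R_n$ to the Bernoulli $U_n = \min(R_n,1)$, apply Corollary~\ref{dtv var plus bernoulli} (via unimodality from Corollary~\ref{Sn is logconcave}), bound the error via Proposition~\ref{Rn tail}, use the triangle inequality, and pass to the limit in $p_n \sup_k \p(S_n=k)$. The only noticeable difference is cosmetic: the paper treats $\sup_k \p(S_n=k) \to C_x$ as an immediate consequence of convergence in distribution, whereas you supply the extra detail about mode stabilization via~\eqref{S ratio} and the boundary cases $x = x_k$; this is a reasonable way to justify a step the paper states tersely.
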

\begin{proof}
Recall from \eqref{S} and \eqref{R} that $S=S_n+R_n$,  with $S_n$ independent of $R_n$.  Let $U_n:=\min(R_n,1)$, and write $p_n=\p(U_n=1)$;  note that $S_n$ is independent of $U_n$. 
As noted in Example \ref{critical x},  the distribution of $S_n$ is    unimodal,  %log-concave,  
so Lemma
\ref{dtv var plus bernoulli} applies, to give
\begin{equation}\label{side BC}
\dtv(S_n,S_n+U_n) = p_n \ \max_k \p(S_n=k).
\end{equation}
Note that $p_n := \p(R_n \ge 1) \ge \p(Z_{n+1} \ge 1) = x^{n+1}$,  and $ \max_k \p(S_n=k) \ge \p(S_n=0)= g_n(x) \ge g(x) >0$,  with $g$ specified by \eqref{g}.   Combined, we have
$\dtv(S_n,S_n+U_n) \ge x^{n+1} g(x)$.

From our particular coupling, we have
\begin{eqnarray*}
  \dtv(S,S_n+U_n) &=   &\dtv(S_n+R_n, S_n+U_n) \\
       & \le & \p(S_n+R_n \ne S_n+U_n) \\
& = & \p(R_n >1). 
\end{eqnarray*}

Now consider the triangle, with vertices at  $A=S$, $B=S_n$, and $C=S_n+U_n$.  The first paragraph of this proof says the the length of side $BC$ is at least $x^{n+1} g(x)$.   The second paragraph says that the length of side $AC$ is at most $\p(R_n>1)$,  which by Proposition \ref{Rn tail} is $O(x^{2n})$,  and since $x \in (0,1)$, the length of $AC$ is little oh of the length of $BC$.   Therefore, by the triangle inequality, the length of $AB$ is asymptotic to the length of $BC$, so from \eqref{side BC},
$$
  \dtv(S,S_n) \sim  p_n \ \max_k \p(S_n=k).
$$

Finally,  $p_n = 1 - \p(R_n=0) = 1 - \prod_{i>n} (1-x^i) \sim x^{n+1}/(1-x)$, as in Theorem \ref{thm dtv process}, and 
 $S_n$ converges to $S$ in distribution, hence, as $n\to\infty$,  $\max_{k} \P(S_n=k)\to \max_{k} \P(S=k) =: C_x$.
\end{proof}

As noted in Example \ref{critical x}, when $x\le 1/2$,  %(in particular, when $x=1/q$ for a prime power $q$), 
the mode of $S$ occurs at $0$, so that
$$
\dtv(S,S_n)\sim \frac{g(x)}{1-x}\cdot x^{n+1},
$$
and we see that Theorem \ref{thm dtv sum} is a special case of Theorem \ref{thm dtv sum 2}.
 
\begin{filecontents}{crank.bib}

@book {Wilf,
    AUTHOR = {Wilf, H. S.},
     TITLE = {generatingfunctionology},
 PUBLISHER = {Academic Press, Inc., Boston, MA},
      YEAR = {1990},
     PAGES = {viii+184},
      ISBN = {0-12-751955-6},
   MRCLASS = {05A15 (05A16)},
  MRNUMBER = {1034250},
MRREVIEWER = {Philip Hanlon},
}

@misc{count,
  author       = {{Sesame Street}}, 
  title        = {The Count's Elevator Job},
  howpublished = {Learning about numbers. Video compilation;  episode 2283},
  month        = February 4,
  year         = 1987,
  note         = {Season 18}
}

 @article {FG,
    AUTHOR = {Fulman, Jason and Goldstein, Larry},
     TITLE = {Stein's method and the rank distribution of random matrices
              over finite fields},
   JOURNAL = {Ann. Probab.},
  FJOURNAL = {The Annals of Probability},
    VOLUME = {43},
      YEAR = {2015},
    NUMBER = {3},
     PAGES = {1274--1314},
      ISSN = {0091-1798},
   MRCLASS = {60B20 (60C05 60F05)},
  MRNUMBER = {3342663},
MRREVIEWER = {Dirk Zeindler},
       DOI = {10.1214/13-AOP889},
       URL = {http://dx.doi.org/10.1214/13-AOP889},
}

@article{IPARCS,
	Author = {Arratia, R. and Tavar{{\'e}}, S.},
	Coden = {ADMTA4},
	Date-Added = {2015-06-20 04:36:39 +0000},
	Date-Modified = {2015-06-20 04:36:49 +0000},
	Doi = {10.1006/aima.1994.1022},
	Journal = {Advances in Mathematics},
	Issn = {0001-8708},
	Mrclass = {60C05 (05A16 60F17)},
	Mrnumber = {1272071 (95c:60010)},
	Mrreviewer = {Andrew D. Barbour},
	Number = {1},
	Pages = {90--154},
	Title = {Independent process approximations for random combinatorial structures},
	Url = {http://dx.doi.org/10.1006/aima.1994.1022},
	Volume = {104},
	Year = {1994},
	Note ={ Available at http://arxiv.org/pdf/1308.3279.pdf},
	Bdsk-Url-1 = {http://www.ams.org/mathscinet-getitem?mr=1272071}}

@book{andrews,
author = {George E. Andrews},
title={The Theory of Partitions}, 
publisher={Cambridge Mathematical Library},
year={1984}
}

@article{fristedt,
title={The structure of random partitions of large integers},
author={Bert Fristedt},
journal={Trans. Amer. Math. Soc.},
year=1993,
volume=337,
number=2,
pages={703-735},
}

@article{PDC,
author = {Arratia,Richard and DeSalvo,Stephen},
title = {Probabilistic Divide-and-Conquer: A New Exact Simulation Method, With Integer Partitions as an Example},
journal = {Combinatorics, Probability and Computing},
volume = {25},
issue = {03},
month = {5},
month = {5},
year = {2016},
issn = {1469-2163},
pages = {324--351},
numpages = {28},
doi = {10.1017/S0963548315000358},
URL = {http://journals.cambridge.org/article_S0963548315000358},
}

@book {vanLint,
    AUTHOR = {van Lint, J. H. and Wilson, R. M.},
     TITLE = {A course in combinatorics},
   EDITION = {Second},
 PUBLISHER = {Cambridge University Press, Cambridge},
      YEAR = {2001},
     PAGES = {xiv+602},
      ISBN = {0-521-00601-5},
   MRCLASS = {05-01 (90B10)},
  MRNUMBER = {1871828},
       DOI = {10.1017/CBO9780511987045},
       URL = {http://dx.doi.org/10.1017/CBO9780511987045},
}

@book {belsley,
    AUTHOR = {Belsley, Eric David},
     TITLE = {Rates of convergence of {M}arkov chains related to association
              schemes},
      NOTE = {Thesis (Ph.D.)--Harvard University},
 PUBLISHER = {ProQuest LLC, Ann Arbor, MI},
      YEAR = {1993},
     PAGES = {116},
   MRCLASS = {Thesis},
  MRNUMBER = {2689583},
       URL =
              {http://gateway.proquest.com/openurl?url_ver=Z39.88-2004&rft_val_fmt=info:ofi/fmt:kev:mtx:dissertation&res_dat=xri:pqdiss&rft_dat=xri:pqdiss:9330868},
}

@article {delsarte,
    AUTHOR = {Delsarte, Ph.},
     TITLE = {Bilinear forms over a finite field, with applications to
              coding theory},
   JOURNAL = {J. Combin. Theory Ser. A},
  FJOURNAL = {Journal of Combinatorial Theory. Series A},
    VOLUME = {25},
      YEAR = {1978},
    NUMBER = {3},
     PAGES = {226--241},
      ISSN = {0097-3165},
     CODEN = {JCBTA7},
   MRCLASS = {94B99 (94B25)},
  MRNUMBER = {514618},
MRREVIEWER = {F. J. MacWilliams},
       DOI = {10.1016/0097-3165(78)90015-8},
       URL = {http://dx.doi.org/10.1016/0097-3165(78)90015-8},
}

	 @article {landsberg,
    AUTHOR = {Landsberg, Georg},
     TITLE = {Ueber eine {A}nzahlbestimmung und eine damit
              zusammenh\"angende {R}eihe},
   JOURNAL = {J. Reine Angew. Math.},
  FJOURNAL = {Journal f\"ur die Reine und Angewandte Mathematik. [Crelle's
              Journal]},
    VOLUME = {111},
      YEAR = {1893},
     PAGES = {87--88},
      ISSN = {0075-4102},
   MRCLASS = {Contributed Item},
  MRNUMBER = {1580299},
       DOI = {10.1515/crll.1893.111.87},
       URL = {http://dx.doi.org/10.1515/crll.1893.111.87},
}
	
@article {stanton,
    AUTHOR = {Stanton, Dennis},
     TITLE = {Some {$q$}-{K}rawtchouk polynomials on {C}hevalley groups},
   JOURNAL = {Amer. J. Math.},
  FJOURNAL = {American Journal of Mathematics},
    VOLUME = {102},
      YEAR = {1980},
    NUMBER = {4},
     PAGES = {625--662},
      ISSN = {0002-9327},
     CODEN = {AJMAAN},
   MRCLASS = {33A75 (05A15 20G40)},
  MRNUMBER = {584464},
MRREVIEWER = {C. F. Dunkl},
       DOI = {10.2307/2374091},
       URL = {http://dx.doi.org/10.2307/2374091},
}
	
@article {andrews1974,
    AUTHOR = {Andrews, George E.},
     TITLE = {Applications of basic hypergeometric functions},
   JOURNAL = {SIAM Rev.},
  FJOURNAL = {SIAM Review},
    VOLUME = {16},
      YEAR = {1974},
     PAGES = {441--484},
      ISSN = {0036-1445},
   MRCLASS = {33A30},
  MRNUMBER = {0352557},
MRREVIEWER = {L. J. Slater},
}

@article{PittelShape,
	Address = {Orlando, FL, USA},
	Author = {Pittel, B.},
	Date-Added = {2015-09-25 16:28:58 +0000},
	Date-Modified = {2015-09-25 16:28:58 +0000},
	Doi = {http://dx.doi.org/10.1006/aama.1996.0523},
	Issn = {0196-8858},
	Journal = {Advances in Applied Mathematics},
	Number = {4},
	Pages = {432--488},
	Publisher = {Academic Press, Inc.},
	Title = {{O}n a likely shape of the random {F}errers diagram},
	Volume = {18},
	Year = {1997},
	Bdsk-Url-1 = {http://dx.doi.org/10.1006/aama.1996.0523}}

@book {stanley,
    AUTHOR = {Stanley, Richard P.},
     TITLE = {Enumerative combinatorics. {V}olume 1},
    SERIES = {Cambridge Studies in Advanced Mathematics},
    VOLUME = {49},
   EDITION = {Second},
 PUBLISHER = {Cambridge University Press, Cambridge},
      YEAR = {2012},
     PAGES = {xiv+626},
      ISBN = {978-1-107-60262-5},
   MRCLASS = {05-02 (05A15 06-02)},
  MRNUMBER = {2868112},
}
 
 \end{filecontents}

\bibliographystyle{acm}
%\bibliography{corank}

\bibliography{crank}

\end{document}